\documentclass[onefignum,onetabnum]{siamonline250211}



\usepackage{lipsum}
\usepackage{amsfonts}
\usepackage{graphicx}
\usepackage{epstopdf}
\usepackage{algorithmic}
\ifpdf
  \DeclareGraphicsExtensions{.eps,.pdf,.png,.jpg}
\else
  \DeclareGraphicsExtensions{.eps}
\fi

\newsiamremark{example}{Example}   
\newsiamthm{assumption}{Assumption}

\usepackage{enumitem}

\usepackage{tikz}
\usetikzlibrary{decorations.markings}
\usetikzlibrary{decorations.pathmorphing}
\usetikzlibrary{shapes.geometric, calc,decorations.pathreplacing}
\usetikzlibrary{backgrounds}
\usetikzlibrary{fit}
\usetikzlibrary{arrows}
\tikzset{
	arrow/.pic={\path[tips,every arrow/.try,->,>=#1] (0,0) -- +(.1pt,0);},
	pics/arrow/.default={triangle 90}
}



\newcommand{\wass}{\mathbb{W}}
\newcommand{\norm}[1]{\lVert #1 \rVert}

\newcommand{\rr}{\mathbb{R}}

\newcommand{\id}{\mathrm{id}}
\newcommand{\proj}{\mathrm{Proj}}
\DeclareMathOperator{\Tr}{tr}
\newcommand{\R}{\mathbb{R}}
\newcommand{\E}{\mathbb{E}}

\newcommand{\Hplanes}{\mathcal{H}}

\newcommand{\Prob}{\mathcal{P}}
\newcommand{\abs}[1]{\left\lvert #1 \right\rvert}
\newcommand{\CEF}{\mathcal{F}}

\newcommand{\T}{\mathcal{T}}

\newcommand{\X}{\mathcal{X}}
\newcommand{\Z}{\mathcal{Z}}
\newcommand{\cO}{\mathcal{O}}

\newcommand{\Ent}{\mathrm{Ent}}

\newcommand{\cconv}{c\text{-}\mathrm{conv}}
\newcommand{\cbarconv}{\overline{c}\text{-}\mathrm{conv}}

\DeclareMathOperator{\Cor}{Cor}

\DeclareMathOperator*{\argmin}{arg\,min}


\newsiamremark{remark}{Remark}
\newsiamremark{hypothesis}{Hypothesis}
\crefname{hypothesis}{Hypothesis}{Hypotheses}
\newsiamthm{claim}{Claim}
\newsiamremark{fact}{Fact}
\crefname{fact}{Fact}{Facts}

\headers{On the Wasserstein alignment problem}{S. Pal, B. Sen, and T.-K. L. Wong}

\title{On the Wasserstein alignment problem\thanks{Submitted to the editors DATE.
\funding{SP acknowledges support from NSF grants DMS-2052239, DMS-2134012, DMS-2133244, and PIMS PRN-01 granted to the Kantorovich Initiative. BS would like to acknowledge support from NSF grant DMS-2311062. LW acknowledges support from NSERC Discovery Grants RGPIN-2019-04419, RGPIN-2025-06021.}}}

\author{Soumik Pal\thanks{Department of Mathematics, University of Washington, Seattle, WA, United States
  (\email{soumik@uw.edu}).}
\and Bodhisattva Sen\thanks{Department of Statistics, Columbia University, 
New York City, NY, United States 
  (\email{bodhi@stat.columbia.edu}).}
\and Ting-Kam Leonard Wong\thanks{Department of Statistical Sciences, University of Toronto, Toronto, ON, Canada  (\email{tkl.wong@utoronto.ca})}}

\usepackage{amsopn}


\ifpdf
\hypersetup{
  pdftitle={On the Wasserstein alignment problem},
  pdfauthor={S. Pal, B. Sen and T.-K. L. Wong}
}
\fi


\externaldocument[][nocite]{supplement}


\begin{document}

\maketitle

\begin{abstract}
Suppose we are given two metric spaces and a family of continuous transformations from one to the other. Given a probability distribution on each of these two spaces---namely the source and the target measures---the {\it Wasserstein alignment problem} seeks the transformation that minimizes the optimal transport cost between the pushforward of the source distribution and the target distribution, ensuring the closest possible alignment in a probabilistic sense. Examples of interest include two distributions on two Euclidean spaces $\rr^n$ and $\rr^d$, and we want a spatial embedding of the $n$-dimensional source measure in $\rr^d$ that is closest in some Wasserstein metric to the target distribution on $\rr^d$. Similar data alignment problems also commonly arise in shape analysis and computer vision. In this paper, we show that this nonconvex optimal transport projection problem admits a convex Kantorovich-type dual that exploits statistical independence. This allows us to characterize the set of projections and devise a linear programming algorithm. For certain special examples, such as orthogonal transformations on Euclidean spaces of unequal dimensions and the $2$-Wasserstein cost, we characterize the covariance of the optimal projections. Our results also cover the generalization when we penalize each transformation by a function. An example is the inner product Gromov--Wasserstein distance minimization problem which has recently gained popularity. 
\end{abstract}

\begin{keywords}
Data alignment, Gromov--Wasserstein distance, iterative closest point (ICP) algorithm, optimal transport, point cloud registration, Procrustes problem,  Wasserstein projection.
\end{keywords}

\begin{MSCcodes}
49Q22, 90C46 (Primary), 90C25, 90C05 (Secondary)
\end{MSCcodes}

\section{Introduction} \label{sec:intro}
Aligning two high-dimensional data sets, each represented as a probability distribution on potentially different spaces, is a fundamental problem with diverse applications. This problem arises in various fields, including shape analysis~\cite{Dryden-Mardia-2016, Srivastava-2016}, domain adaptation~\cite{Courty2016,Peyre2019}, unsupervised alignment of word embeddings~\cite{Alvarez2018, Alvarez2019, Grave2019}, and the integration of multi-modal single-cell sequencing data~\cite{Bunne2024, Demetci2022}, among others. Although the precise definition of alignment varies across studies, most formulations share a common goal: identifying a suitable transformation to associate one data set or probability distribution with another. In addition, in many of these applications, optimal transport \cite{Villani2003, V08} plays a pivotal role, as it effectively captures the geometric structure of the data and the underlying spaces.
 
In this paper, we introduce a {\it Wasserstein alignment problem} that unifies a wide variety of alignment problems involving optimal transport. This alignment problem is generally non-convex. Through a novel convex relaxation exploiting statistical independence, we prove a generalized Kantorovich duality that not only characterizes the solution to the original problem, but also sheds light on possible algorithmic development. Our setup has the following components:
\begin{itemize}
\item A source metric space $\X$ equipped with a probability measure $\mu$.
\item A target metric space $\Z$ and a probability measure $\nu$ on $\Z$.
\item A cost function $c : \Z \times \Z \rightarrow \R$ on the target space $\Z$.
\item A family of mappings $\T = \{ T_{\theta} \}_{\theta \in \Theta}$, where each $T_{\theta}$ maps $\X$ to $\Z$ and is parameterized by $\theta$ belonging to a metric space $\Theta$. We denote $T_{\theta}x \equiv T_{\theta}(x)$.
\end{itemize}
We emphasize that the two spaces $\X$ and $\Z$ are allowed to be distinct and have different dimensions. The required regularity conditions are gathered in Assumption \ref{asmp:gendual} below.

For each $\theta \in \Theta$, the pushforward $(T_{\theta})_{\#} \mu := \mu \circ T_{\theta}^{-1}$ is a probability distribution on $\Z$. Using the cost $c$ on $\Z$, we define the optimal transport cost between $(T_{\theta})_{\#} \mu$ and $\nu$ by
\begin{equation} \label{eqn:c.OT.cost}
\wass_c \left(  (T_{\theta})_{\#} \mu, \nu \right) := \inf_{\pi \in \Pi( (T_{\theta})_{\#}\mu, \nu)} \int_{\Z \times \Z} c( y, z) \, d \pi(y, z),
\end{equation}
where $\Pi( (T_{\theta})_{\#}\mu, \nu)$ is the set of couplings of the pair  $((T_{\theta})_{\#}\mu, \nu)$, i.e., the set of joint probability distributions on $\Z \times \Z$ with marginals $(T_{\theta})_{\#}\mu$ and $\nu$. 

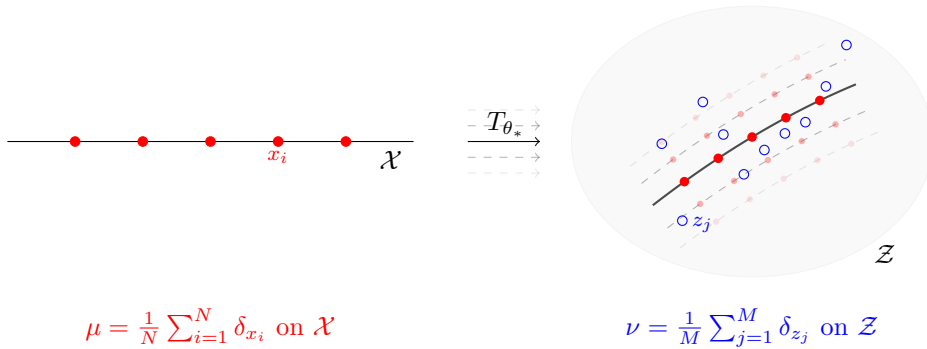
\begin{figure}[t!]
\centering
\begin{tikzpicture}[scale = 0.7]
\pgfmathsetseed{103}

\node[red, below] at (0, -3.5) {$\mu = \sum_{i =1}^N \frac{1}{N} \delta_{x_i}$ on $\X$};
\node[red, below] at (1.5, 0) {\footnotesize $x_i$};
\draw[black] (-4.5, 0) -- (4.5, 0); 
\node[black, below] at (4, 0) {$\mathcal{X}$};
\filldraw[red] (0,0) circle (3pt);
\filldraw[red] (1.5,0) circle (3pt);
\filldraw[red] (3,0) circle (3pt);
\filldraw[red] (-1.5,0) circle (3pt);
\filldraw[red] (-3,0) circle (3pt);

\draw[black, ->, dashed, opacity = 0.1] (5.7, 0.7) -- (7.3, 0.7);
\draw[black, ->, dashed, opacity = 0.3] (5.7, 0.35) -- (7.3, 0.35);
\draw[black, ->] (5.7, 0) -- (7.3, 0);
\draw[black, ->, dashed, opacity = 0.3] (5.7, -0.35) -- (7.3, -0.35);
\draw[black, ->, dashed, opacity = 0.1] (5.7, -0.7) -- (7.3, -0.7);
\node[black, above] at (6.55, -0.1) {$T_{\theta_*}$};

\node[blue, below] at (12, -3.5) {$\nu = \sum_{j = 1}^M \frac{1}{M} \delta_{z_j}$ on $\Z$};
\draw [fill = gray, opacity = 0.05] (12,0) ellipse (4 and 3);
\node[black, right] at (14.5, -2.5) {$\mathcal{Z}$};
\foreach \t in {0,1,...,9} {
        \pgfmathsetmacro\x{10 + 5*\t/11}
        \pgfmathsetmacro\y{-0.9 + 0.3*\t + 1.2*rand}
        \draw[blue] (\x,\y) circle (3pt);
    }
\node[blue, below] at (10.9, -1.46) {\footnotesize $z_j$};

\draw[black, thick, opacity = 0.7, domain=0.6:9.6, variable=\t]
        plot ({9.5 + 5*\t/10}, { -1.4 + 0.3*\t - 0.01*(\t - 5)^2});
\foreach \t in {2,3.5,5,6.5,8} {
        \pgfmathsetmacro\x{9.5 + 5*\t/10}
        \pgfmathsetmacro\y{ -1.4 + 0.3*\t - 0.01*(\t - 5)^2}
        \fill[red] (\x,\y) circle (3pt);
    }
\draw[black, dashed, opacity = 0.3, domain=0.3:9.35, variable=\t]
        plot ({9.4 + 5*\t/10}, { -0.8 + 0.3*\t - 0.01*(\t - 5)^2});
\foreach \t in {1.7,3.2,4.7,6.2,7.7} {
        \pgfmathsetmacro\x{9.4 + 5*\t/10}
        \pgfmathsetmacro\y{ -0.8 + 0.3*\t - 0.01*(\t - 5)^2}
        \fill[red, opacity = 0.3] (\x,\y) circle (2pt);
    }
\draw[black, dashed, opacity = 0.1, domain=0:8.7, variable=\t]
        plot ({9.3 + 5*\t/10}, { -0.3 + 0.3*\t - 0.01*(\t - 5)^2});
\foreach \t in {1.4,2.9,4.4,5.9,7.4} {
        \pgfmathsetmacro\x{9.3 + 5*\t/10}
        \pgfmathsetmacro\y{ -0.3 + 0.3*\t - 0.01*(\t - 5)^2}
        \fill[red, opacity = 0.1] (\x,\y) circle (2pt);
    }       
\draw[black, dashed, opacity = 0.3, domain=0.85:9.9, variable=\t]
        plot ({9.7 + 5*\t/10}, { -2 + 0.3*\t - 0.01*(\t - 5)^2});
\foreach \t in {2.3,3.8,5.3,6.8,8.3} {
        \pgfmathsetmacro\x{9.7 + 5*\t/10}
        \pgfmathsetmacro\y{ -2 + 0.3*\t - 0.01*(\t - 5)^2}
        \fill[red, opacity = 0.3] (\x,\y) circle (2pt);
   }   
\draw[black, dashed, opacity = 0.1, domain=1:10, variable=\t]
        plot ({9.9 + 5*\t/10}, { -2.5 + 0.3*\t - 0.01*(\t - 5)^2});
\foreach \t in {2.6,4.1,5.6,7.1,8.6} {
        \pgfmathsetmacro\x{9.9 + 5*\t/10}
        \pgfmathsetmacro\y{ -2.5 + 0.3*\t - 0.01*(\t - 5)^2}
        \fill[red, opacity = 0.1] (\x,\y) circle (2pt);
    }  
\end{tikzpicture}
\caption{Illustration of the Wasserstein alignment problem \cref{eq:wgencost}, where both $\mu$ and $\nu$ are discrete measures. Given a family $\{T_{\theta}\}_{\theta \in \Theta}$ of continuous mappings from $\X$ to $\Z$, we wish to find $\theta_* \in \Theta$ such that the pushforward $(T_{\theta_*})_{\#} \mu$ is closest to $\nu$ with respect to the optimal transport cost $\wass_c$. {\bf Left}: $\mu$ is the point cloud with masses at $x_i$ represented by solid red circles. {\bf Right}: $\nu$ is the point cloud with masses at $z_j$ represented by hollow blue circles. The solid curve and the solid red circles represent the images of $\X$ and $x_i$ under the optimal map $T_{\theta_*}$. The other curves and circles are images under suboptimal $T_{\theta}$.} \label{fig:alignment.illustration}
\end{figure}

The {\it Wasserstein alignment problem} considered in this paper is the optimization problem
\begin{equation} \label{eq:wgencost}
\wass_c^{\T} (\mu, \nu) := \inf_{\theta \in \Theta} \wass_c\left(  (T_{\theta})_{\#} \mu, \nu \right),
\end{equation}
where the superscript $\T$ refers to the family $\{T_{\theta} : \X \rightarrow \Z \}$ of transformations. This problem is asymmetric in $\mu$ and $\nu$ since each $T_{\theta}$ pushforwards the source distribution $\mu$ on $\X$ to a distribution on the target space $\Z$. More generally, we may consider additionally a \textit{penalization function} $R:\Theta \rightarrow \rr$, which leads to the {\it penalized Wasserstein alignment problem}
\begin{equation} \label{eq:wgencostpenal}
\wass_{c}^{\T, R} (\mu, \nu) := \inf_{\theta \in \Theta} \left\{ \wass_c\left(  (T_{\theta})_{\#} \mu, \nu \right) + R(\theta) \right\}.
\end{equation}
The general idea is that we would like to find a transformation $T_{\theta_*}$ that aligns most closely, in an optimal transport sense, $\mu$ to $\nu$ on $\Z$. See \cref{fig:alignment.illustration} for an illustration when $R \equiv 0$. In practice, the penalization term $R(\theta)$ can be thought of as penalizing the {\it complexity} of the transformation $T_\theta$. 

We list a few examples where special cases of this, or similar, problems have been studied.\footnote{See \cref{sec:further.examples} for further examples.} In particular, in \cref{eg:GW} below we show that the {\it inner product Gromov--Wasserstein distance} can be written in the form of~\cref{eq:wgencostpenal} for a suitable $R(\theta)$.

\begin{example}[Procrustes problem] \label{eg:Procrustes}
In the Procrustes problem, one seeks an optimal transformation---such as translation, scaling, rotation, or reflection---that best aligns one data matrix, $\mathbf{X} \in \R^{N \times n}$, with another, $\mathbf{Y} \in \R^{N \times d}$~\cite{Gower-Dijksterhuis-2004}. Specifically, the objective is to find a transformation matrix $T \in \R^{n \times d}$ that solves $\min_{T} \|\mathbf{X} T - \mathbf{Y}\|^2$, where $\|\cdot\|$ denotes the Frobenius norm. The matrix $T$ can be restricted to different subclasses of linear transformations.  In essence, Procrustes analysis learns a linear transformation that maps one set of {\it matched} points -- represented by the rows of $\mathbf{X}$ and $\mathbf{Y}$ -- onto the other. A particularly well-studied case is the {\it orthogonal Procrustes problem}, where $T$ is restricted to be an orthogonal matrix (i.e., $n=d$ and $T^\top T = I_n$). This variant has a closed form solution~\cite{Schonemann-1966} and has gained renewed interest in machine learning~\cite{Xing-2015}. The Procrustes problem has widespread applications, from multivariate statistics to machine learning, including shape analysis in 2D~\cite{Dryden-Mardia-2016, Goodall-1991} and learning a linear mapping between word embeddings in different languages using a bilingual lexicon~\cite{Mikolov-2013}. Our problem~\cref{eq:wgencost} extends the Procrustes problem by allowing for an arbitrary family $\mathcal{T}$  of transformations as opposed to just linear transformations. More importantly, we remove the assumption that the correspondences between the two sets are known a priori. Instead, we leverage the Wasserstein distance in~\cref{eq:wgencost} to infer these correspondences.
\end{example}

\begin{example}[Point-cloud registration or scan matching] \label{eg:point-cloud}
Point-cloud registration is an important problem for aligning 2D/3D shapes with applications in shape analysis, computer vision, robotics, and medical imaging~\cite{besl1992method, Dryden-Mardia-2016, Srivastava-2016}. 
 The problem is to find an optimal spatial transformation (scaling, rotation and translation) that minimizes the discrepancy between two point clouds. The Iterative Closest Point or ICP algorithm is a widely used method to solve this problem. The algorithm iteratively refines the alignment by: (1) finding correspondences by pairing each point in one data set with its ``nearest'' point (either in a greedy or an optimal matching sense) in the other, (2) estimating a transformation that minimizes the sum of squared distances between the matched points, and (3) applying the transformation and repeating until convergence. Despite its efficiency, ICP suffers from sensitivity to initialization, often converging to local minima if the initial alignment is poor. Additionally, it is vulnerable to noise and outliers. Many variants of the ICP algorithm have been developed over the years to improve its accuracy, robustness, and convergence properties~\cite{pomerleau2013, rusinkiewicz2001}. Our objective in~\cref{eq:wgencostpenal} indeed tackles a problem similar to that of the ICP algorithm. However, our proposed convex Kantorovich-type dual formulation (see \cref{thm:duality}) overcomes many of the limitations inherent to ICP, offering a theoretically grounded approach.
\end{example}

\begin{example}[Gromov--Wasserstein alignment] \label{eg:GW}
The Gromov--Wasserstein (GW) distance \cite{Memoli-2011} generalizes the standard optimal transport framework to compare two metric measure spaces. Unlike traditional optimal transport, which relies on a given cost function to transport from one space to the other, GW instead compares pairwise distances or dissimilarities within each space under suitable couplings. This intrinsic approach makes GW particularly well suited for aligning distributions that lack a common ambient space. The GW framework has been applied across a wide range of domains involving heterogeneous data, including single-cell genomics~\cite{Blumberg-2020, CWZ25, Demetci2022}, alignment of language models~\cite{Alvarez2018}, shape and graph matching~\cite{Koehl-2023,Memoli-2009,Xu-2019}, heterogeneous domain adaptation~\cite{Yan-2018}, and generative modeling~\cite{Bunne-2019}.

A variant of GW, called the {\it inner product} GW or IGW, can be reduced to our setup. The IGW distance $\mathrm{IGW}(\mu, \nu)$ between probability measures $\mu$ on $\R^n$ and $\nu$ on $\R^d$ is defined as
\begin{equation} \label{eqn:IGW}
\mathrm{IGW}^2(\mu, \nu) :=\inf_{\pi \in \Pi(\mu, \nu)} \int_{(\R^n \times \R^d)^2} \abs{x \cdot x' - z \cdot z'}^2 d\pi(x,z) d\pi(x', z'),
\end{equation}
where $x \cdot x'$ and $z \cdot z'$ are inner products on $\R^n$ and $\R^d$ respectively. It is shown in~\cite[Lemma 2.1]{Zhang-2024} that $\mathrm{IGW}^2(\mu, \nu)$ is a sum of two quantities $F_1 + F_2$ where $F_1$ depends only on the marginals $\mu$ and $\nu$, and $F_2$ captures the actual optimization over couplings. Moreover, it follows using a dual representation of \cref{eqn:IGW} that (also see \cref{eqn:wass.c.change.coupling}),
\begin{equation}\label{IGW}
F_2= \inf_{A\in \rr^{n \times d}} \left\{ \wass_c((A^\top)_\#\mu, \nu) + 8 \norm{A}^2 \right\},
\end{equation}
where $(A^\top)_\#\mu$ is the pushforward of $\mu$ under the transpose mapping $x \mapsto A^{\top} x$, and the infimum is over all $n \times d$ matrices $A$. Here, $c(x,z)=-8x \cdot  z$ is the inner product cost function on $\R^d$ and $\norm{A}$ refers to the Frobenius norm of $A$. This is exactly of the form \cref{eq:wgencostpenal}. An analogous variational representation for an entropic Gromov--Wasserstein distance was given in  \cite[Theorem 1]{ZGMS24}. These results, and ours, demonstrate the power of duality in elucidating the hidden structures of optimal transport problems beyond the classical setting.
\end{example}

\begin{example}[Wasserstein projection] \label{eg:Wasserstein.projection}
Returning to \cref{eq:wgencost}, we see that $\wass_c^{\T} (\mu, \nu)$ can be regarded as a projection problem in optimal transport. Let $\T_{\#}\mu := \{ (T_{\theta})_{\#} \mu: \theta \in \Theta\}$ be the subset of the space $\Prob(\Z)$ of all probability measures on $\Z$ comprising of the pushforwards of $\mu$ by functions in $\T$. Then the $\argmin_{\nu' \in \T_{\#} \mu} \wass_c(\nu', \nu)$, if it exists, can be thought of as a projection of $\nu$ onto $\T_{\#}\mu$ under the $\wass_c$-loss. Our work is thus related to optimal transport between different spaces \cite{CMP17, MP20} as well as Wasserstein projections in various senses \cite{ACJ20, CG03}.

In fact, projections on arbitrary subsets of the Wasserstein space may be regarded as an example of our setup. For simplicity, take $\X = \Z = \R^n$ and $c(y, z) = |y - z|^2$ to be the quadratic cost. Let $\{\mu_\theta : \theta \in \Theta\}$ be any subset of the $2$-Wasserstein space of probability measures on $\rr^n$, parametrized by $\Theta$. Consider a probability distribution $\mu$ on $\rr^n$ that is absolutely continuous (say, standard normal). Thus, by Brenier's theorem, for every $\mu_\theta$, there exists a map $T_\theta$ such that $\left(T_\theta\right)_{\#}\mu= \mu_\theta$. Letting $\T = \{T_{\theta} : \theta\in \Theta\}$, we see that for any other probability measure $\nu$, we have $\wass_c^\T(\mu, \nu)= \inf_{\theta \in \Theta} \wass_2\left(\mu_\theta, \nu \right)$. 
\end{example}

The (penalized) Wasserstein alignment problem \cref{eq:wgencostpenal} is generally nonconvex in the sense that the objective $\wass_c \left(  (T_{\theta})_{\#} \mu, \nu \right) + R(\theta)$ is generally not convex in $\theta \in \Theta$ (when $\Theta \subset \R^k$ for some $k$). An explicit example is given in \cref{sec:appendix.1}. Although the existence of an optimal $\theta \in \Theta$ for \cref{eq:wgencost} can be proved under mild conditions (see \cref{prop:existence}), 
one needs additional conditions to show uniqueness (see \cref{prop:uniqueness}) and stability of the solution when the source and target measures are perturbed. The {\it value} $\wass_c^{\T, R}(\mu, \nu)$ is nevertheless stable under perturbation of the source and target measures (see \cref{prop:existence}). 

In optimization in general and optimal transport in particular, a general strategy for dealing with new problems is to formulate suitable relaxations under which the problem becomes convex. Nevertheless, its implementation differs case by case depending on the structure of the problem in hand.  One of our main contributions in this paper is to formulate, under mild conditions, such a generalized Kantorovich duality for the (penalized) Wasserstein alignment problem \cref{eq:wgencostpenal}. In particular, it leads naturally to a sufficient and necessary condition for optimality. We expect that this theoretical insight will be useful in the study of specific alignment problems, where additional structural conditions can be exploited. 

To state this duality result, let us first introduce some notation. For a topological space $E$, let $C(E)$ be the set of all real-valued continuous functions on $E$.

\begin{definition}[Function class for the dual problem]\label{defn:whatisCEF}
Given $\mu \in \Prob(\X)$, let $\CEF_{\mu}$ denote the set of $\xi  \in C(\X \times \Theta)$  such that the integral $\int_{\X} \xi(x, \theta) d\mu(x)$ exists and is a constant function of $\theta \in \Theta$. This constant value will be denoted by $\int \xi(x, \cdot) d\mu(x)$.
\end{definition}

\begin{assumption} \label{asmp:gendual}
We assume the following conditions:
\begin{itemize}[itemindent=1.5cm] 
\item[(i)] $(\X, d_{\X})$ and $(\Z, d_{\Z})$ are compact metric spaces.
\item[(ii)] $(\Theta, d_{\Theta})$ is a compact metric space.
\item[(iii)] $T: \Theta \times \X \mapsto \Z$ defined by $T(\theta, x) = T_{\theta}x$ is continuous. 
\item[(iv)] $c: \Z \times \Z \rightarrow \R$ is jointly continuous.
\item[(v)] $R: \Theta \rightarrow \R$ is continuous.
\end{itemize}
\end{assumption}

\begin{remark}[Comments on the assumptions]
Part (i) of the assumptions may be equivalently stated as $\mu$ and $\nu$ are compactly supported on their ambient spaces. This compactness assumption may be relaxed at the cost of introducing additional integrability conditions on $\mu$ and $\nu$. For the Euclidean setting (where $\X = \R^n$ and $\Z = \R^d$) described in \cref{sec:Euclidean}, we only assume, at least initially, that $\mu$ and $\nu$ have finite second moments that are suitably normalized. Compactness of $\Theta$ is natural in many geometric settings. For example, in the Euclidean setting $\Theta$ corresponds to a set orthogonal transformations. Even if $\Theta$ is not compact, as long as $\wass( (T_{\theta})_{\#}\mu, \nu)$ diverges when $\theta$ tends to infinity, we may replace $\Theta$ by a compact subset of it. Finally, note that in (iv) we allow $c(y, z)$ to be asymmetric in $(y, z)$. 
\end{remark}

We now state our main result which will be proved in \cref{sec:duality}.

\begin{theorem}[Generalized Kantorovich duality] \label{thm:duality}
Under \cref{asmp:gendual}, 
    \begin{equation}\label{eq:cxdual}
            \wass_c^{\T, R}(\mu, \nu)= \sup\left\{ \int_{\X} \xi(x, \cdot) d\mu(x) + \int_{\Z} \psi(z) d\nu(z) \right\},
        \end{equation}
        where the supremum is over all functions $(\xi,\psi)\in \CEF_{\mu} \times C(\Z)$ satisfying the constraint
        \begin{equation}\label{eq:dualconstraint}
            \qquad \;\; \xi(x, \theta) + \psi(z) \le c\left(T_{\theta} x, z \right) + R(\theta), \quad \mbox{for all} \;\; (x, \theta, z) \in \X \times \Theta \times \Z.
        \end{equation}
\end{theorem}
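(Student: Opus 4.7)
My plan is to prove the two inequalities $D \le \wass_c^{\downarrow,R}(\mu,\nu)$ and $\wass_c^{\downarrow,R}(\mu,\nu) \le D$, where $D$ denotes the supremum on the right-hand side of \eqref{eq:cxdual}.

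The first (weak duality) is nearly immediate. Take any feasible $(\xi,\psi) \in \CEF_\mu \times C(\Z)$, any $\theta_0 \in \Theta$, and any coupling $\pi \in \Pi(\mu,\nu)$. Integrating the pointwise constraint \eqref{eq:dualconstraint} against $\pi$, and using the defining property of $\CEF_\mu$ that $\int_\X \xi(x,\theta_0)\,d\mu(x)$ equals the constant $\int \xi(x,\cdot)\,d\mu$, one obtains
\[
\int \xi(x,\cdot)\,d\mu + \int \psi\,d\nu \le R(\theta_0) + \int c(T_{\theta_0}x,z)\,d\pi.
\]
Taking the infimum over $\pi$ (which by classical Kantorovich gives $R(\theta_0) + \wass_c((T_{\theta_0})_\#\mu,\nu)$) and then over $\theta_0$ yields the desired weak inequality.

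For strong duality, my plan is a minimax swap. First, use the classical Kantorovich dual on the inner OT to rewrite
\[
\wass_c^{\downarrow,R}(\mu,\nu) = \inf_{\theta \in \Theta}\sup_{\phi+\psi \le c}\Big\{R(\theta) + \int \phi\,d(T_\theta)_\#\mu + \int \psi\,d\nu\Big\}.
\]
Lift the outer infimum to $P \in \Prob(\Theta)$; this preserves the value because the integrand is linear in $P$ and Dirac masses are the extreme points of $\Prob(\Theta)$. Then apply Sion's minimax theorem on the compact convex set $\Prob(\Theta)$ (compact by Prokhorov under Assumption \ref{asmp:gendual}) and the convex set $\{(\phi,\psi):\phi+\psi \le c\}$. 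To make the latter amenable to Sion, restrict to a uniformly bounded, equicontinuous family via $c$-conjugation (possible since $\Z$ is compact and $c$ is continuous), apply Arzelà--Ascoli, and work modulo the additive-constant redundancy $(\phi,\psi)\mapsto (\phi+t,\psi-t)$. The output is
\[
\wass_c^{\downarrow,R}(\mu,\nu) = \sup_{\phi+\psi \le c}\Big\{\inf_{\theta \in \Theta}\Big[\int \phi\,d(T_\theta)_\#\mu + R(\theta)\Big] + \int \psi\,d\nu\Big\}.
\]

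To match this with $D$, for each feasible $(\phi,\psi)$ set $K := \inf_\theta\bigl[\int \phi\,d(T_\theta)_\#\mu + R(\theta)\bigr]$, $g(\theta) := \int \phi\,d(T_\theta)_\#\mu + R(\theta) - K$, and $\xi(x,\theta) := \phi(T_\theta x) + R(\theta) - g(\theta)$. Then $g \ge 0$ by definition of $K$, and $\int_\X \xi(x,\theta)\,d\mu(x) = K$ for every $\theta$, so $\xi \in \CEF_\mu$ with $\int \xi(x,\cdot)\,d\mu = K$. Moreover $\xi(x,\theta) + \psi(z) = \phi(T_\theta x) + \psi(z) + R(\theta) - g(\theta) \le c(T_\theta x,z) + R(\theta)$ by $\phi+\psi \le c$ and $g \ge 0$, so $(\xi,\psi)$ is dual-feasible with value $K + \int \psi\,d\nu$; passing to the supremum closes the chain. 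The step I expect to be hardest is the rigorous compactification underlying the Sion swap — identifying a weak-compact convex subfamily of dual potentials on which the supremum is attained, which is exactly where the continuity and compactness hypotheses of Assumption \ref{asmp:gendual} earn their keep.
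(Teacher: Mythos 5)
Your weak-duality half and your final $\xi$-construction (setting $\xi(x,\theta)=\phi(T_\theta x)+R(\theta)-g(\theta)$ with $g(\theta)=\int\phi\,d(T_\theta)_\#\mu+R(\theta)-K\ge 0$) are correct, and the latter is essentially the paper's $J_\psi$ device. The genuine gap is in the step you call a lift. After you pull the supremum over $(\phi,\psi)$ outside the $P$-integral --- which you must do to put the problem in minimax form --- the function of $P$ being minimized is $P\mapsto \sup_{\phi+\psi\le c}\bigl\{\int\!\!\int \phi(T_\theta x)\,d\mu\,dP+\int\psi\,d\nu\bigr\}+\int R\,dP = \wass_c\bigl(\int (T_\theta)_\#\mu\,dP(\theta),\nu\bigr)+\int R\,dP$. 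This is a supremum of linear functionals of $P$, hence \emph{convex} in $P$, not linear; the infimum of a convex function over $\Prob(\Theta)$ need not be attained at an extreme point, so ``Dirac masses are extreme points'' does not justify that the lifted infimum equals $\inf_\theta$. Your argument therefore only yields $\overline{\wass}\le D\le \wass_c^{\downarrow,R}$, where $\overline{\wass}:=\inf_P[\wass_c(\int(T_\theta)_\#\mu\,dP,\nu)+\int R\,dP]$ is the randomized relaxation; the inequalities point the wrong way to conclude. (Your ``integrand linear in $P$'' observation is correct for the \emph{inner} infimum after the Sion swap, where it is not needed.)

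What closes this gap is precisely the paper's Lemma~\ref{lem:Krelax}: the relaxation over $\Upsilon(\mu,\nu)$ (equivalently over $P\in\Prob(\Theta)$) does not lower the value. The paper proves it by Krein--Milman applied to the \emph{primal} coupling formulation: the linear objective $\gamma\mapsto\int(c(T_\theta x,z)+R(\theta))\,d\gamma$ attains its infimum over the convex compact set $\Upsilon(\mu,\nu)$ at an extreme point, and extreme points of $\Upsilon(\mu,\nu)$ have Dirac $\Theta$-marginals. You would need to import or reprove this. With that lemma in hand, your route (classical Kantorovich inside, then Sion on $\Prob(\Theta)\times\{\phi+\psi\le c\}$) is a legitimate alternative to the paper's Fenchel--Rockafellar argument; note also that Sion requires compactness of only one of the two sets, so $\Prob(\Theta)$ suffices and the Arzel\`a--Ascoli compactification of the potentials that you flag as the hard part is actually unnecessary --- the hard part is the lift.
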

Note that the above problem is convex in its argument $(\xi, \psi)$. Hence we can implement the non-convex Wasserstein alignment problem by running a {\it linear program}.  In \cref{sec:implementation} we provide a linear programming formulation in a  discretized setting and  illustrate its use with several examples from shape analysis. We also discuss possible ways of making this approach more scalable, but a full algorithmic development is beyond the scope of this paper.

\cref{thm:duality} is a consequence of a {\it convex relaxation} of \cref{eq:wgencostpenal} that allows $\theta \in \Theta$ to be {\it randomized}. Define
\begin{equation}\label{eq:convexify}
 \overline{\wass}_c^{\T, R}(\mu, \nu) := \inf_{\gamma \in \Upsilon(\mu, \nu)} \left\{ \int_{\X \times \Theta \times \Z} \left(c\left(T_{\theta}x, z\right) + R(\theta) \right) d\gamma(x, \theta, z) \right\}.
\end{equation}
Here, $\Upsilon(\mu, \nu)$ is the set of Borel probability measures on $\X \times \Theta \times \Z$ such that if $(X, \eta, Z) \sim \gamma \in \Upsilon(\mu, \nu)$ then (i) $X \sim \mu$ and $Z \sim \nu$, and (ii) $X$ and $\eta$ are independent. It is shown in \cref{lem:Krelax} that $\overline{\wass}_c^{\T, R}(\mu, \nu) = \wass_c^{\T, R}(\mu, \nu)$ and it is via \cref{eq:convexify} that we obtain the dual \cref{eq:cxdual}.

\begin{remark}[Discussion]
The reader might wonder what the intuition is behind demanding $X$ and $\eta$ to be independent in \cref{eq:convexify}. In the Wasserstein alignment problem, we need to pick a {\it single} $\theta$ so as to make $(T_{\theta})_{\#} \mu$ as close as possible to $\nu$. If the randomized parameter $\eta$ and $X \sim \mu$ were allowed to be dependent, one could use the {\it realized value} of $X$ to pick a favorable value of $\theta$. This strictly violates the set-up of our alignment problem. The independence assumption is heavily used in our proof of \cref{thm:duality}.
\end{remark}

As a corollary, we provide a necessary and sufficient condition for $\theta_* \in \Theta$ to be optimal for $\wass_c^{\T, R}(\mu, \nu)$. Before presenting this result, we introduce some notation. The following definition is a standard concept in optimal transport theory (see, e.g.,~~\cite[Definition 1.10]{santam2015ot}). 

\begin{definition}[$c/\overline{c}$- transform ]\label{defn:cconcave}
Given a function $\psi: \Z \rightarrow \overline{\R} := [-\infty, \infty]$, we define its $c$-transform $\psi^c$ by $\psi^c(z) := \inf_{y \in \Z}\left\{ c(y,z) - \psi(y) \right\}$, $z \in \Z$, and its $\overline{c}$-transform by $\psi^{\overline{c}}(y) := \inf_{z \in \Z}\left\{ c(y,z) - \psi(z) \right\}$, $y \in \Z$. We say that $\psi : \Z \rightarrow \R \cup \{-\infty\}$ is $c$-concave (resp.~$\overline{c}$-concave) if $\psi \not\equiv -\infty$ and $\psi = \varphi^{\overline{c}}$ (resp.~$\psi = \varphi^c$) for some $\varphi$. We let $\cconv(\Z)$ (resp.~$\cbarconv(\Z)$) be the set of $c$-concave (resp.~$\overline{c}$-concave) functions.
\end{definition}

As we allow $c(y, z)$ to be asymmetric in $y$ and $ z$, the $c$ and $\overline{c}$-transforms are generally different. Clearly, if $c$ is symmetric (e.g.,~the quadratic cost $c(y, z) = \|y - z\|^2$), then the two transforms coincide. In this case, we call both $\psi^{c}$ and $\psi^{\overline{c}}$ the $c$-transform. It follows that $\psi^{\bar{c}}(y) + \psi(z) \leq c(y, z)$ for all $y, z \in \Z$. For $\psi \in C(\Z)$, define $I_{\psi} : \Theta \rightarrow \R$ by
\begin{equation}\label{eqn:whatisIpsi}
I_\psi(\theta):= \int_{\X} \psi^{\overline{c}}(T_{\theta} x) d\mu(x) + R(\theta).
\end{equation}
Combining \cref{thm:duality} with $\overline{c}$-concavity, we obtain the following result proved in \cref{sec:duality}.

\begin{corollary}[Optimality criterion] \label{cor:optimality}
Suppose Assumption \ref{asmp:gendual} holds. Then $\theta\in \Theta$ is optimal for $\wass_c^{\T, R}(\mu, \nu)$ if and only if $\theta \in \argmin_\Theta I_{\psi}(\cdot)$ for some pair $(\psi^{\overline{c}}, \psi)$ of Kantorovich potentials for $\wass_c\left( (T_{\theta})_{\#}\mu, \nu \right)$. In fact, for any $\theta \in \Theta$ and any pair $(\psi^{\overline{c}}, \psi)$ of Kantorovich potentials for $\wass_c\left( (T_{\theta})_{\#}\mu, \nu \right)$, we have
\begin{equation} \label{eqn:gap}
0 \leq \left( \wass_c\left( (T_{\theta})_{\#} \mu, \nu \right) + R(\theta) \right) - \wass_c^{\T, R}(\mu, \nu) \leq I_{\psi}(\theta) - \min_{\Theta} I_{\psi}(\cdot).
\end{equation}
\end{corollary}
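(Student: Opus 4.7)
The plan is to first establish the two-sided bound \eqref{eqn:gap} directly from Theorem \ref{thm:duality}, which immediately yields the ``if'' direction, and then handle the ``only if'' direction by invoking the existence of a maximizer of the reduced dual derived from \eqref{eq:cxdual}.

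For \eqref{eqn:gap}, the left inequality is immediate from the definition of $\wass_c^{\downarrow, R}(\mu, \nu)$ as an infimum. For the right inequality, fix $\theta \in \Theta$ and any pair $(\psi^{\overline{c}}, \psi)$ of Kantorovich potentials for $\wass_c((T_\theta)_\# \mu, \nu)$. The strategy is to plug into Theorem \ref{thm:duality} the pair $(\xi, \psi)$ where
\[
\xi(x, \theta') := \psi^{\overline{c}}(T_{\theta'} x) + R(\theta') - I_\psi(\theta') + \min_\Theta I_\psi.
\]
A direct calculation gives $\int_\X \xi(x, \theta') \, d\mu(x) = \min_\Theta I_\psi$ for every $\theta'$, so $\xi \in \CEF_\mu$ once continuity is checked; the latter follows from Assumption \ref{asmp:gendual} together with the standard fact that $\overline{c}$-transforms are continuous on a compact metric space. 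The admissibility constraint \eqref{eq:dualconstraint} is verified by combining \eqref{eqn:c.Fenchel.inequality} with $I_\psi(\theta') \geq \min_\Theta I_\psi$. Theorem \ref{thm:duality} then yields
\[
\wass_c^{\downarrow, R}(\mu, \nu) \;\geq\; \min_\Theta I_\psi + \int_\Z \psi \, d\nu.
\]
Separately, the Kantorovich property $\int \psi^{\overline{c}} \, d(T_\theta)_\#\mu + \int \psi \, d\nu = \wass_c((T_\theta)_\# \mu, \nu)$ is equivalent to $I_\psi(\theta) + \int \psi \, d\nu = \wass_c((T_\theta)_\#\mu, \nu) + R(\theta)$; subtracting the two displays delivers the right inequality in \eqref{eqn:gap}.

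The ``if'' direction is now immediate: if $\theta \in \argmin_\Theta I_\psi$ for some pair of Kantorovich potentials, the right-hand side of \eqref{eqn:gap} vanishes and $\theta$ is optimal. For the converse, suppose $\theta_*$ is optimal. The construction above is the pointwise maximizer of $\xi$ given $\psi$ in \eqref{eq:cxdual}, so the dual reduces to
\[
\wass_c^{\downarrow, R}(\mu, \nu) \;=\; \sup_{\psi \in C(\Z)} \Big\{ \min_\Theta I_\psi + \int_\Z \psi \, d\nu \Big\},
\]
and the supremum may be restricted to $\overline{c}$-concave $\psi$ (replacing $\psi$ by $\psi^{\overline{c}c}$ leaves $\psi^{\overline{c}}$ unchanged and can only increase $\int \psi \, d\nu$). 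Under Assumption \ref{asmp:gendual} the family of $\overline{c}$-concave functions is equicontinuous and, modulo the additive constants that leave the functional invariant, compact in $C(\Z)$; the reduced functional is continuous under uniform convergence, so the supremum is attained at some $\psi_*$. Combining
\[
\min_\Theta I_{\psi_*} + \int \psi_* \, d\nu = \wass_c\left((T_{\theta_*})_\# \mu, \nu\right) + R(\theta_*)
\]
with $I_{\psi_*}(\theta_*) \geq \min_\Theta I_{\psi_*}$ and the Fenchel bound $I_{\psi_*}(\theta_*) + \int \psi_* \, d\nu \leq \wass_c((T_{\theta_*})_\# \mu, \nu) + R(\theta_*)$ forces both inequalities to be equalities: the first gives $\theta_* \in \argmin_\Theta I_{\psi_*}$, and the second, rewritten, states that $(\psi_*^{\overline{c}}, \psi_*)$ is a pair of Kantorovich potentials for $\wass_c((T_{\theta_*})_\# \mu, \nu)$.

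The main obstacle is justifying attainment of the reduced dual, which is not part of Theorem \ref{thm:duality}. Under the compactness hypotheses this is a standard Arzel\`a--Ascoli argument, but it must be executed carefully working modulo additive constants; I do not anticipate any surprises. Everything else is bookkeeping built on \eqref{eqn:c.Fenchel.inequality}, Theorem \ref{thm:duality}, and the classical Kantorovich duality applied at $\theta_*$.
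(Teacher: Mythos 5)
Your proposal is correct and follows essentially the same route as the paper: your $\xi$ is exactly the function $J_\psi$ of \eqref{eq:jpsi}, the feasibility check and the use of the Kantorovich potential identity for the bound \eqref{eqn:gap} match the paper's argument, and your ``only if'' direction is precisely the content of the paper's Theorem \ref{thm:duality2} (attainment of the reduced dual over $\overline{c}$-concave functions via Arzel\`a--Ascoli) followed by the same squeeze between weak duality and the attained maximum. The only difference is organizational: the paper isolates the attainment step as a separate theorem before deducing the corollary, whereas you fold it into the proof.
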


A particular example that we will develop in further detail is the Euclidean setting, where $\X = \R^n$, $\Z = \R^d$ with $n \geq d$, $c(y, z) = \|y - z\|^2$ is the quadratic cost on $\R^d$, and each $T_{\theta}$ is a linear transformation of the form $T_\theta(x) = A^\top x$ for a matrix $A \in \R^{n \times d}$ with orthonormal columns. This setting is closely related to the {\it sliced Wasserstein distance} \cite{BRPP15} which is also based on linear projections of measures. Here, we can analogously define another Wasserstein alignment problem, in the opposite direction, using the transposes $A^{\top}: \R^d \rightarrow \R^n$ and the quadratic cost on $\R^n$. Remarkably, we show in \cref{prop:Euclidean.equivalence} that the two   alignment problems (from $\R^n$ to $\R^d$ and from $\R^d$ to $\R^n$) are, in fact, equivalent when $\mu$ and $\nu$ are normalized to have zero means and identity covariances. Exploiting the Lie group structure of orthogonal matrices, we show in \cref{thm:downfoc} that the cross-correlation of the optimal projection must satisfy a striking symmetry condition. This Euclidean case will be described in \cref{sec:Euclidean} and further developed in \cref{sec:Euclidean.further}. 

\subsection{Open questions} 
We conclude this introduction by highlighting some open problems. 

Our first problem concerns implementation of the dual problem \cref{eq:cxdual}. When everything is discrete we have provided a linear programming formulation of this problem in \cref{sec:implementation}. This, unfortunately, requires $\Theta$ to be discrete as well, which is not ideal. Can we solve this convex problem efficiently for an arbitrary $\Theta$ between two empirical distributions $\mu$ and $\nu$?

Speaking of empirical distributions, \cref{lem:existence}, part (ii), shows that a sample estimate of $\wass_c^\T(\mu, \nu)$ is consistent under mild assumptions. Recently, the convergence rate of estimation of $\wass_c^\T(\mu, \nu)$ from samples from $\mu$ and $\nu$ was established in \cite{KW25}. How about the sample complexity of the optimal transformation $T_{\theta_*}$?

Computations in the usual optimal transport  are usually tackled by {\it entropic regularization}. Here too one can formulate a natural entropic regularized problem. Suppose $\X, \Z, \Theta$ are all measurable subsets of Euclidean spaces. For any Borel probability distribution $\gamma$ on $\X \times \Theta \times \Z$, we say it is absolutely continuous if it is absolutely continuous with respect to the product (restricted) Lebesgue measure. If $\gamma$ is absolutely continuous (and identified with its density), define its entropy as $\Ent(\gamma) := \int \gamma \log \gamma \, dxd\theta dz$. Take $\Ent(\gamma)=\infty$ if $\gamma$ is not absolutely continuous. Consider the convex relaxation \cref{eq:convexify}. If one modifies it to, for $\epsilon >0$, 
\begin{equation*}
\overline{\wass}_{c, \epsilon}^{\T, R}(\mu, \nu) := \inf_{\gamma \in \Upsilon(\mu, \nu)} \left\{ \int_{\X \times \Theta \times \Z} \left( c\left(T_{\theta}x, z\right) + R(\theta) \right) d\gamma(x, \theta, z) + \epsilon  \Ent(\gamma) \right\},
\end{equation*}
then the problem is strictly convex in $\gamma$ and therefore, admits, a unique minimum. It is not difficult to see that $\lim_{\epsilon \rightarrow 0^+}\overline{\wass}_{c, \epsilon}^{\T, R}(\mu, \nu)=\wass_c^{\T, R} (\mu, \nu)$. The question is whether one may use the Sinkhorn algorithm to solve this new variant? What is its sample complexity?

Many of the natural computational methods for similar problems hinge on the uniqueness of the minimizer. Although in  \cref{prop:uniqueness} we prove uniqueness under somewhat stringent conditions, it remains a delicate theoretical open problem to formulate natural sufficient conditions to guarantee uniqueness.

\section{Wasserstein alignment} \label{sec:set-up}
This section has two purposes. First, we establish in \cref{sec:general.formulation} some basic existence and stability results for our penalized Wasserstein alignment problem introduced in \cref{eq:wgencostpenal}. Second, we introduce the Euclidean setting in \cref{sec:Euclidean} which will be further developed in \cref{sec:Euclidean.further}.

\subsection{Basic properties} \label{sec:general.formulation}
We will work under \cref{asmp:gendual}. Given a topological space $E$, let $\Prob(E)$ denote the set of all Borel probability measures on $E$. In the following, we use $w, x$ to denote elements of $\X$, and $y, z$ for elements of $\Z$. We equip $\Prob(\X)$ and $\Prob(\Z)$ with the topology of weak convergence.  Since $\X$ and $\Z$ are compact, by Prokhorov's theorem, $\Prob(\X)$ and $\Prob(\Z)$ are compact as well. Another useful consequence of \cref{asmp:gendual} is that $T$, $c$ and $R$ are {\it uniformly continuous} on their respective domains. For each $\theta \in \Theta$, recall the quantity $\wass_c\left( (T_{\theta})_{\#} \mu, \nu \right)$ defined by \cref{eqn:c.OT.cost}. 

It is standard (see e.g.~\cite[Theorem 4.1]{V08}) to show that $\wass_c\left( (T_{\theta})_{\#} \mu, \nu \right)$ is finite and is attained by an optimal coupling $\pi \in \Pi ( (T_{\theta})_{\#}\mu, \nu)$. Minimizing over $\theta \in \Theta$ yields 
$\wass_c^{\T}(\mu, \nu)$, as defined in \cref{eq:wgencost}. When a penalization term 
$R$ is included, this results in $\wass_c^{\T, R}(\mu, \nu)$, as defined in \cref{eq:wgencostpenal}. We say that $\theta_* \in \Theta$ is {\it optimal} for $\wass_c^{\T, R}(\mu, \nu)$ if $\wass_c\left( (T_{\theta_*})_{\#} \mu, \nu \right) + R(\theta_*) = \wass_c^{\T, R}(\mu, \nu)$. When $R \equiv 0$, we say that $\theta_*$ is optimal for $\wass_c^{\T}(\mu, \nu)$.

An important special case is where the cost function $c(\cdot,\cdot)$ is a power of the metric $d_{\Z}$ on $\Z$, that is, $c(y, z) = d^p_{\Z}(y, z)$ for some $p \geq 1$. Then $\wass_c = \wass_p^p$, where
\[
\wass_p(\nu_0, \nu_1) := \left( \inf_{\pi \in \Pi(\nu_0, \nu_1)} \int d_{\Z}^p(y, z) d \pi(y, z) \right)^{1/p}
\]
is the {\it $p$-Wasserstein distance} between $\nu_0, \nu_1 \in \Prob(\Z)$. Since $\Z$ is assumed to be compact, $\Prob_p(\Z)$, the set of Borel probability measures on $\Z$ with finite $p$-th moment, reduces to $\Prob(\Z)$. Also, convergence in $\wass_p$ is equivalent to weak convergence. In this case, we define
\begin{equation} \label{eqn:downward.Wasserstein}
\wass_p^{\T}(\mu, \nu) := \inf_{\theta \in \Theta} \wass_p\left( (T_{\theta})_{\#} \mu, \nu \right).
\end{equation}

We begin with the following lemma which gives some basic properties of the optimal transport cost $\wass_c \left(  (T_{\theta})_{\#} \mu, \nu \right)$ defined in~\cref{eqn:c.OT.cost}. 

\begin{lemma} \label{lem:existence} { \ }
Under \cref{asmp:gendual} we have the following results:
\begin{itemize}[itemindent = 1.5cm]
\item[(i)] For any $\theta \in \Theta$ we have
\begin{equation} \label{eqn:wass.c.change.coupling}
\wass_c \left(  (T_{\theta})_{\#} \mu, \nu \right) = \inf_{\gamma \in \Pi( \mu, \nu)} \int_{\X \times \Z} c\left( T_{\theta} x, z \right) d \gamma (x, z);
\end{equation}
that is, we may directly optimize over couplings of $\mu$ and $\nu$.
\item[(ii)] $\wass_c \left( (T_{\theta})_{\#}\mu, \nu \right)$ is jointly continuous in $(\mu, \theta, \nu) \in \Prob(\X) \times \Theta \times \Prob(\Z)$.
\end{itemize} 
\end{lemma}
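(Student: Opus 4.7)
For part (i), the plan is to exhibit a cost-preserving correspondence between $\Pi((T_\theta)_\#\mu, \nu)$ and $\Pi(\mu, \nu)$. Given $\gamma \in \Pi(\mu, \nu)$, I would push it forward by the measurable map $(x, z) \mapsto (T_\theta x, z)$ to obtain $\pi \in \Pi((T_\theta)_\#\mu, \nu)$, and a direct change of variables gives $\int c(y,z)\,d\pi = \int c(T_\theta x, z)\,d\gamma$, yielding the inequality $\wass_c((T_\theta)_\#\mu, \nu) \le \inf_{\Pi(\mu,\nu)} \int c(T_\theta x, z)\,d\gamma(x,z)$. For the reverse inequality, since $\Z$ is Polish (as a compact metric space), any $\pi \in \Pi((T_\theta)_\#\mu, \nu)$ disintegrates along its first coordinate as $\pi(dy, dz) = K(y, dz)\,(T_\theta)_\#\mu(dy)$ for some stochastic kernel $K$. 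I would then define $\gamma(dx, dz) := K(T_\theta x, dz)\,\mu(dx)$ and verify directly that $\gamma \in \Pi(\mu, \nu)$ and $\int c(T_\theta x, z)\,d\gamma = \int c(y,z)\,d\pi$, closing the loop.

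For part (ii), I plan to leverage (i) by writing, for each fixed $\theta$,
\[
\wass_c((T_\theta)_\# \mu, \nu) = \inf_{\gamma \in \Pi(\mu, \nu)} \int c_\theta(x, z)\,d\gamma(x, z), \qquad c_\theta(x, z) := c(T_\theta x, z),
\]
and then reducing to the classical OT-stability result with a \emph{fixed} continuous cost. Under Assumption~\ref{asmp:gendual}, $T$ and $c$ are uniformly continuous on their compact domains, so $\theta \mapsto c_\theta$ is continuous from $\Theta$ into $C(\X \times \Z)$ equipped with the uniform norm. For a convergent sequence $(\mu_k, \theta_k, \nu_k) \to (\mu, \theta, \nu)$, I would apply the triangle-inequality estimate
\[
\left| \wass_c((T_{\theta_k})_\#\mu_k, \nu_k) - \wass_c((T_\theta)_\#\mu, \nu) \right| \le \| c_{\theta_k} - c_\theta \|_\infty + \left| \inf_{\Pi(\mu_k, \nu_k)} \int c_\theta\,d\gamma - \inf_{\Pi(\mu, \nu)} \int c_\theta\,d\gamma \right|.
\]
The first term vanishes by the uniform-norm continuity of $\theta \mapsto c_\theta$; the second vanishes by the standard stability of OT under weak convergence of marginals with a continuous cost on a compact space (e.g., \cite[Theorem 5.20]{V08}), which can alternatively be proved directly via Prokhorov's theorem on the set of couplings together with lower semicontinuity of the cost functional.

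The main (and quite mild) obstacle is the bookkeeping required to handle the joint variation in all three variables $(\mu, \theta, \nu)$ simultaneously; the split above isolates the $\theta$-dependence from the $(\mu, \nu)$-dependence cleanly, so each piece can be dispatched by an independent standard argument. No deeper technical difficulty is anticipated beyond quoting (or briefly reproving) stability of classical OT with a fixed continuous cost on compact spaces.
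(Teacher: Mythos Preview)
Your proposal is correct and follows essentially the same strategy as the paper. For (i), both arguments use disintegration to lift a coupling $\pi\in\Pi((T_\theta)_\#\mu,\nu)$ to one in $\Pi(\mu,\nu)$; the paper disintegrates $\mu$ conditional on $T_\theta X=y$ whereas you disintegrate $\pi$ along its first marginal, which is an equally valid (and arguably cleaner) choice. For (ii), the paper handles the varying cost and varying marginals in one step via \cite[Theorem~5.20]{V08} together with a subsequence/Skorokhod argument, while your triangle-inequality split first freezes the cost and then invokes the same stability result; the two routes use the same ingredients and differ only in packaging.
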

\begin{proof}
(i) Clearly, if $\gamma$ is coupling of $(\mu, \nu)$, then $\pi = (T_{\theta}, \id)_{\#} \gamma$ is a coupling of $\left((T_{\theta})_{\#}\mu, \nu \right)$ and
\[
\int_{\X \times \Z} c\left( T_{\theta} x, z \right) d \gamma(x, z) = \int_{\Z \times \Z} c(y, z) d \pi(y, z) \geq \wass_c \left(  (T_{\theta})_{\#} \mu, \nu \right).
\]
Taking infimum over $\gamma$ shows that the inequality $\leq$ holds in \cref{eqn:wass.c.change.coupling}. Next, using the disintegration theorem, define $\mu (dx | y)$ to be the conditional distribution of $X$ given $T_{\theta} X = y$ if unconditionally $X \sim \mu$. For each $\pi \in \Pi\left( (T_{\theta})_{\#}\mu, \nu\right)$, define $\gamma$ by the $(x, z)$-marginal of the probability measure $\mu(dx | y) \pi(dy, dz)$. It is easy to see that $\gamma \in \Pi(\mu, \nu)$ and
\begin{equation*}
\begin{split}
\int_{\X \times \Z} c(T_{\theta}x, z) d \gamma(x, z) &= \int_{\X \times \Z \times \Z} c(T_{\theta}x, z)  \mu(dx |y) \pi(dy, dz) \\
&= \int_{\Z \times \Z} c(y, z) \pi(dy, dz),
\end{split}
\end{equation*}
since $T_{\theta}x = y$ almost surely under $\mu(dx | y) \pi(dy, dz)$.  Thus the reverse inequality holds as well.

(ii) Let $\mu_k \rightarrow \mu_{\infty} \in \Prob(\X)$, $\theta_k \rightarrow \theta_{\infty} \in \Theta$ and $\nu_k \rightarrow \nu_{\infty} \in \Prob(\Z)$. We will show that for any subsequence $k'$ there exists a further subsequence $k''$ along which $\wass_c\left( (T_{\theta_{k''}})_{\#} \mu_{k''}, \nu_{k''}\right) \rightarrow \wass_c\left( (T_{\theta_{\infty}})_{\#} \mu_{\infty}, \nu_{\infty}\right)$. Clearly, this implies that the original sequence converges to the same limit. Thus consider a subsequence $k'$. For each $k'$, let $\gamma_{k' }\in \Pi(\mu_{k'}, \nu_{k'})$ be optimal for $\wass_c\left( (T_{\theta_{k'}})_{\#} \mu_{k'}, \nu_{k'}\right)$. We claim that $c(T_{\theta_k} x, z) \rightarrow c(T_{\theta_{\infty}} x, z)$ uniformly in $(x, z) \in \X \times \Z$. Since $T$ is uniformly continuous on $\X \times \Z$, there exists $\omega_T: [0, \infty) \rightarrow [0, \infty)$ with $\lim_{t \downarrow 0} \omega_T(t) = \omega_T(0) = 0$, such that
\[
d_{\Z} \left( T_{\theta}x, T_{\theta'} x'\right) \leq \omega_T\left( d_{\Theta}(\theta, \theta') + d_{\X}(x, x')\right), \quad (\theta, x), (\theta', x') \in \Theta \times \X.
\]
We call $\omega_T$ a {\it modulus of continuity} for $T$. Similarly, there exists a modulus of continuity $\omega_c$ for $c$, such that
\[
|c(y, z) - c(y', z')| \leq \omega_c \left( d_{\Z}(y, y') + d_{\Z}(z, z') \right), \quad (y, z), (y', z') \in \Z \times \Z.
\]
It follows that
\[
\sup_{x \in \X, z \in \Z} \left| c\left( T_{\theta_k}x, z \right) - c\left( T_{\theta_{\infty}}x, z \right) \right| \leq \omega_c \left( \omega_T \left( d_{\Theta}(\theta_k, \theta) \right) \right) \rightarrow 0, \quad k \rightarrow \infty.
\]

By \cite[Theorem 5.20]{V08}, we obtain a further subsequence $k''$ along which $\gamma_{k''}$ converges weakly to some $\gamma_{\infty} \in \Pi(\mu, \nu)$ which is optimal for $\wass_c \left( (T_{\theta_{\infty}})_{\#} \mu_{\infty}, \nu_{\infty} \right)$. Since $\X$ and $\Z$ are Polish, the Skorokhod representation theorem implies that on some probability space there exist random elements $(X_{k''}, Z_{k''})$ such that $(X_{k''}, Z_{k''}) \sim \gamma_{k''}$ and $(X_{k''}, Z_{k''}) \rightarrow (X_{\infty}, Z_{\infty})$ almost surely with $(X_{\infty}, Z_{\infty}) \sim \gamma$. Since $c$ is bounded and $c(T_{\theta_{k''}} X_{k''}, Z_{k''}) \rightarrow c(T_{\theta_{\infty}} X_{\infty}, Z_{\infty})$ almost surely, the bounded convergence theorem gives
\begin{equation*}
\begin{split}    
\lim_{{k''} \rightarrow \infty} \wass_c \left( (T_{\theta_{k''}})_{\#}\mu_{k''}, \nu_{k''} \right) &= \lim_{{k''} \rightarrow \infty} \E_{(X_{k''}, Z_{k''}) \sim \gamma_{k''}} \left[ c(T_{\theta_{k''}} X_{k''}, Z_{k''})  \right] \\
  &= \E_{(X_{\infty}, Z_{\infty}) \sim \gamma_{\infty}} \left[ c(T_{\theta_{\infty}} X_{\infty}, Z_{\infty})  \right] \\
  &= \wass_c \left( (T_{\theta_{\infty}})_{\#} \mu_{\infty}, \nu_{\infty} \right).
\end{split}
\end{equation*}
Hence $\wass_c \left( (T_{\theta})_{\#}\mu, \nu \right)$ is jointly continuous in $(\mu, \theta, \nu)$.
\end{proof}

\begin{proposition}[Existence and stability] \label{prop:existence} The following holds under \cref{asmp:gendual}.
\begin{itemize}[itemindent=1.5cm]
    \item[(i)] There exists $\theta_* \in \Theta$ which is optimal for $\wass_c^{\T, R}(\mu, \nu)$.
    \item[(ii)] $\wass_c^{\T, R}(\mu, \nu)$ is continuous in $(\mu, \nu) \in \Prob(\X) \times \Prob(\Z)$.
    \item[(iii)] If $\mu_k \rightarrow \mu_{\infty} \in \Prob(\X)$ and $\nu_k \rightarrow \nu_{\infty} \in \Prob(\Z)$, and if $\theta_k$ is optimal for $\wass_c^{\T, R}(\mu_k, \nu_k)$, then any limit point $\theta_{\infty}$ of the sequence $(\theta_k)_{k \ge 1}$ in $\Theta$ is optimal for $\wass_c^{\T, R}(\mu_{\infty}, \nu_{\infty})$. 
\end{itemize}
\end{proposition}
\begin{proof}
(i) For $\mu \in \Prob(\X)$ and $\nu \in \Prob(\Z)$ fixed, the objective $\wass_c \left(  (T_{\theta})_{\#} \mu, \nu \right) + R(\theta)$ is continuous in $\theta$ from \cref{lem:existence}(ii) and the continuity of $R$. Since $\Theta$ is compact, the existence of an optimal $\theta_*$ follows from the Weierstrass theorem.

(ii) Suppose $\mu_k \rightarrow \mu_{\infty} \in \Prob(\X)$ and $\nu_k \rightarrow \nu_{\infty} \in \Prob(\Z)$.\footnote{Implicitly, we will be taking a subsequence as in the proof of \cref{lem:existence}(ii).} For each $k$, let $\theta_{k} \in \Theta$ be optimal for $\wass_c^{\downarrow, R}(\mu_{k}, \nu_{k})$. 
Since $\Theta$ is compact, by passing along a subsequence, we may assume $\theta_{k} \rightarrow \theta_{\infty} $ for some $\theta_{\infty} \in \Theta$. From \cref{lem:existence}(ii), we have
\begin{equation} \label{eqn:lsc} 
\begin{split}
\wass_c^{\downarrow, R}(\mu_{\infty}, \nu_{\infty})  &\leq \wass_c \left( (T_{\theta_{\infty}})_{\#} \mu_{\infty}, \nu_{\infty} \right) + R(\theta_{\infty}) 
= \lim_{k \rightarrow \infty} \wass_c^{\downarrow, R}(\mu_{k}, \nu_{k}).   
\end{split}
\end{equation}
To see that equality holds, let $\theta_* \in \Theta$ be optimal for $\wass_c^{\downarrow, R}(\mu_{\infty}, \nu_{\infty})$, and let $(\tilde{\theta}_{k}) \subset \Theta$ be any sequence that converges to $\theta_*$. Using \cref{lem:existence}(ii) again, we have
\begin{equation*}
\begin{split}
\wass_c^{\downarrow, R}(\mu_{\infty}, \nu_{\infty}) &= \wass_c\left( (T_{\theta_*})_{\#} \mu_{\infty}, \nu_{\infty}\right) + R(\theta_*) \\
&= \lim_{k \rightarrow \infty}\left( \wass_c \left(  (T_{\tilde{\theta}_{k}})_{\#} \mu_{k}, \nu_{k} \right) + R(\tilde{\theta}_k) \right) \\
&\geq \lim_{k \rightarrow \infty} \wass_c^{\downarrow, R}(\mu_{k}, \nu_{k}). 
\end{split}
\end{equation*}

(iii) This follows from the equality in \cref{eqn:lsc}.
\end{proof}

Note that although the value $\wass_c^{\T, R}(\mu, \nu)$ is stable under perturbation in $\mu$ and $\nu$, the optimizer $\theta_*$ is not necessarily continuous in $\mu$ and $\nu$. This is related to the question of uniqueness, which we consider next. In general, the minimizer of \cref{eq:wgencostpenal} is not unique. For example, the distribution $\mu$ may have some symmetry properties with respect to the set $\T$ of transformations. A more well-posed question is to ask whether the optimal $(T_\theta)_{\#}\mu$ is unique in $\T_{\#} \mu =\left\{ (T_\theta)_{\#}\mu \,: \theta \in \Theta\right\}$, although the same pushforward measure may be obtained by multiple transformations.  This question seems to be subtle. For example, when $c$ is the quadratic cost on Euclidean spaces, a natural guess is that this projection is unique when $\T_{\#} \mu$ is geodesically convex and the penalization function $R$ is geodescially convex on $\T_{\#} \mu$. But a counterexample can be found in \cite{adve25}.\footnote{We thank Daniel Lacker for pointing this paper to us.}  We also note that non-uniqueness and stability of solution is also a possible issue in other statistical methodologies such as principal component analysis. 

Here, we give a simple sufficient condition for uniqueness and leave a more complete analysis to future research. Let us restrict ourselves to Euclidean spaces $\mathcal{X}\subseteq \rr^n$ and $\mathcal{Z}\subseteq \rr^d$, and $c(y, z) = \|y - z\|^2$ is the quadratic cost. Note that $\mathcal{X}$ and $\mathcal{Z}$ do not necessarily have to be compact for the following discussion. Suppose also that there is a function $R$ on the $2$-Wasserstein space of $\rr^d$ such that, by an abuse of notation, we can write the penalization in the form $R(\theta)=R\left( (T_\theta)_\#\mu\right)$. We will use the concept of {\it generalized geodesic convexity} which can be found in \cite[Definition 7.31]{santam2015ot}.

\begin{proposition}\label{prop:uniqueness}
Consider the Euclidean set-up described above. Fix the target measure $\nu$ and let $\T_{\#} \mu$ be compact in the weak topology and generalized geodesically convex with base $\nu$.
Let $R$ be a proper, lower semicontinuous and strictly generalized geodesically convex function with base $\nu$. Then the penalized Wasserstein alignment problem \cref{eq:wgencostpenal} has a unique minimizer in $\T_{\#} \mu$, i.e., $\argmin_{\rho \in \Theta(\mu)} \left\{ \wass^2_2\left(  \rho, \nu \right) + R(\rho)\right\}$ is unique.
\end{proposition}

\begin{proof} It is known (\cite[Section 7.3, page 276]{santam2015ot}) that the function $\rho \mapsto \wass_2^2(\rho, \nu)$ is generalized geodesically convex with base $\nu$. Thus, the function $\rho \mapsto \wass_2^2(\rho, \nu)+ R(\rho)$ is lower semicontinuous and strictly generalized geodesically convex with base $\nu$. Thus it admits a unique minimum over a compact set.   
\end{proof}

\subsection{The Euclidean case} \label{sec:Euclidean}
The quadratic cost on Euclidean space and the associated $2$-Wasserstein distance are convenient and natural in many applications of optimal transport. In fact, the $2$-Wasserstein alignment between distributions on Euclidean spaces with unequal dimensions is the original motivation for this work.

In the following we describe precisely what we mean by the Euclidean case. Let $\X = \R^n$ and $\Z = \R^d$, where $n \geq d$. In this context, we call the source space $\X$ the {\it upward space} and the target space $\Z$ the {\it downward space}. We let $0_k$ be the zero vector in $\R^k$ and $I_k$ be the $k \times k$ identity matrix. All vectors are considered column vectors. By $\Prob_2(\R^n)$ we denote the space of all probability measures on $\R^n$ with finite second moment. We now replace the compactness condition in \cref{asmp:gendual}(i) by the finiteness of the first two moments of $\mu$ and $\nu$. 

\begin{assumption} \label{asmp:Euclidean}
We assume $\mu \in \Prob_2(\R^n)$ and $\nu \in \Prob_2(\R^d)$. 
\end{assumption}

Let $c(y, z) = \|y - z\|^2$ be the quadratic cost on the target space $\R^d$. Here, we write $c \equiv c_{\Z}$ to emphasize that it is the cost function on the target space. To define the {\it reverse alignment problem}, we also consider the Euclidean cost $c_{\X}(w, x) = \|w - x\|^2$ on the source space $\R^n$. Here we hide the dependence on the dimension ($n$ or $d$) of $\|\cdot\|$ which should be clear from the context.

The class of transformations we pick are orthogonal linear transformations indexed by 
\begin{equation} \label{eqn:Hplanes}
\Hplanes :=  \{ A \in \R^{n \times d} : A^{\top} A = I_d \}
\end{equation}
of $n \times d$ matrices with orthonormal columns. We regard $\Hplanes$ as a subset of $\R^{n \times d}$ and endow it with the distance induced by the Frobenius norm, also denoted by $\|\cdot \|$. For each $A \in \Hplanes$, $A^{\top}$, which is a $d \times n$ matrix, defines a linear map from $\R^n$ onto $\R^d$. Using the general formulation introduced in \cref{sec:intro}, we have $\Theta = \Hplanes$ and $T_{A}x := A^{\top} x$, for $A \in \Hplanes$ and $x \in \R^n$. If we define also the $S_A: \R^d \rightarrow \R^n$ by $S_Az := Az$, then, since $A^{\top} A = I_d$, we have that
\begin{equation} \label{eqn:orthogonal.projection}
S_A T_A x =  AA^{\top} x = \proj_{\mathrm{range}(A)} x
\end{equation}
is the orthogonal projection onto the column space of $A$. We let $A_{\#}^{\top} \mu \in \Prob(\R^d)$ be the pushforward of $\mu$ under the map $x \mapsto A^{\top} x$ (and similarly for $A_{\#} \nu \in \Prob(\R^n)$). It is helpful to think of $\X = \R^n$ as the state space of the observed data, and $\Z = \R^d$ as the latent space on which $\nu$ is taken to be a standard distribution (e.g.~Gaussian). Given $A \in \Hplanes$, we may think of $A_{\#} \nu$ as an approximation of $\mu$ based on the linear embedding $z \in \R^d \mapsto Az \in \R^n$. The transpose $x \mapsto A^{\top}x \in \Z$ provides an encoding map in the opposite direction. The identity~\cref{eqn:orthogonal.projection} implies that the composition $x \mapsto z = A^{\top}x \mapsto Az$ recovers $x$ if $x \in \mathrm{range}(A)$.

Following \cref{eqn:downward.Wasserstein}, we define the {\it downward $2$-Wasserstein loss} between $\mu$ and $\nu$ by
\begin{equation} \label{eqn:2.Wasserstein.downward}
\wass_2^{\downarrow}(\mu, \nu) := \wass_2^{\Hplanes}(\mu, \nu)  = \inf_{A \in \Hplanes} \wass_2 \left(  A_{\#}^{\top} \mu, \nu\right).
\end{equation}
There is no penalization term ($R \equiv 0$). In particular, if $n = d$, then \cref{eqn:2.Wasserstein.downward} amounts to finding an optimal orthogonal transformation to align $\mu$ with $\nu$ under $\wass_2$-loss. Since the $2$-Wasserstein distance is a metric, we may strengthen \cref{lem:existence} to a Lipschitz property without the compact support condition. With this, \cref{prop:existence} generalizes straightforwardly.

\begin{lemma} \label{lem:W2.estimates}
Let $(\mu, \nu)$ and $(\mu', \nu')$ be two pairs of distributions that satisfy Assumption \ref{asmp:Euclidean} and $A, A' \in \Hplanes$. Then, we have
\begin{equation} \label{eqn:stability}
|\wass_2( A_{\#}^{\top} \mu, \nu) - \wass_2( (A')_{\#}^{\top} \mu', \nu')| \leq c \|A - A'\| + \wass_2(\mu, \mu') + \wass_2(\nu, \nu')
\end{equation}
where $c:= \sqrt{\E[\norm{X}^2]}$ for $X \sim \mu$. 
\end{lemma}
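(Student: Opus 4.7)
The plan is to reduce the claim to the triangle inequality for $\wass_2$ together with two elementary Wasserstein bounds: one that measures how much the pushforward changes when the matrix $A$ is perturbed, and one that measures how much the pushforward changes when the source measure $\mu$ is perturbed. Two applications of the triangle inequality give
\[
|\wass_2(A_{\#}^{\top}\mu,\nu) - \wass_2((A')_{\#}^{\top}\mu',\nu')| \leq \wass_2(A_{\#}^{\top}\mu,(A')_{\#}^{\top}\mu) + \wass_2((A')_{\#}^{\top}\mu,(A')_{\#}^{\top}\mu') + \wass_2(\nu,\nu'),
\]
so it suffices to bound the first two terms on the right by $c\|A-A'\|$ and $\wass_2(\mu,\mu')$ respectively.

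For the first term I would use the synchronous coupling: let $X \sim \mu$ and consider the joint law of $(A^{\top}X, (A')^{\top}X)$, which is a coupling of $A_{\#}^{\top}\mu$ and $(A')_{\#}^{\top}\mu$. Then
\[
\wass_2^2(A_{\#}^{\top}\mu,(A')_{\#}^{\top}\mu) \leq \E\bigl[\|(A-A')^{\top}X\|^2\bigr] \leq \|A-A'\|_{\mathrm{op}}^2\,\E[\|X\|^2] \leq \|A-A'\|^2\,\E[\|X\|^2],
\]
using that the operator norm is dominated by the Frobenius norm. Taking square roots gives the bound $c\|A-A'\|$.

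For the second term I would take an optimal coupling $\gamma \in \Pi(\mu,\mu')$ for $\wass_2(\mu,\mu')$ and push it forward through the map $(x,x') \mapsto ((A')^{\top}x,(A')^{\top}x')$, which produces a coupling of $(A')_{\#}^{\top}\mu$ and $(A')_{\#}^{\top}\mu'$. Since $A' \in \Hplanes$ satisfies $(A')^{\top}A' = I_d$, the matrix $A'(A')^{\top}$ is an orthogonal projection, so $\|(A')^{\top}(x-x')\| \leq \|x-x'\|$ pointwise. Integrating against $\gamma$ yields
\[
\wass_2^2((A')_{\#}^{\top}\mu,(A')_{\#}^{\top}\mu') \leq \int \|x-x'\|^2\,d\gamma(x,x') = \wass_2^2(\mu,\mu').
\]

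Combining the three bounds produces the desired inequality. No step here looks like a real obstacle; the only mild subtleties are keeping straight that $A^{\top}$ (rather than $A$) is the map being applied, and remembering that orthonormal \emph{columns} of the tall matrix $A$ make $(A')^{\top}$ a contraction (not an isometry) on $\R^n$, which is exactly what is needed.
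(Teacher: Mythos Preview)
Your proof is correct and follows essentially the same route as the paper: the same triangle-inequality decomposition into three terms, the same synchronous coupling $(A^{\top}X,(A')^{\top}X)$ for the matrix perturbation, and the same optimal-coupling argument exploiting that $A'(A')^{\top}$ is an orthogonal projection for the measure perturbation. The only cosmetic difference is that you invoke the operator-norm/Frobenius-norm inequality where the paper says ``Cauchy--Schwarz''.
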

\begin{proof}
Using the triangle inequality of $\wass_2$ (on $\Prob_2(\R^p)$  and $\Prob_2(\R^d)$), we have
\begin{equation*}
\begin{split}
&|\wass_2( A_{\#}^{\top} \mu, \nu) - \wass_2( (A')_{\#}^{\top} \mu', \nu')| \\
&\leq |\wass_2( A_{\#}^{\top} \mu, \nu) - \wass_2((A')_{\#}^{\top}\mu, \nu)| + |\wass_2((A')_{\#}^{\top}\mu, \nu) - \wass_2((A')_{\#}^{\top}\mu', \nu)| \\
&\quad + |\wass_2((A')_{\#}^{\top}\mu', \nu) - \wass_2((A')_{\#}^{\top}\mu', \nu')| \\
&\leq \wass_2( A_{\#}^{\top} \mu, (A')_{\#}^{\top}\mu) + \wass_2( (A')_{\#}^{\top} \mu, (A')_{\#}^{\top} \mu') + \wass_2(\nu, \nu').
\end{split}
\end{equation*}

To bound the first term, consider the coupling $(A^{\top} X, (A')^{\top}X)$ of $( A_{\#}^{\top} \mu, (A')_{\#}^{\top}\mu)$, where $X \sim \mu$. By the Cauchy--Schwarz inequality, we have 
\begin{equation} \label{eqn:AT.Lipschitz}
\wass_2^2( A_{\#}^{\top} \mu, (A')_{\#}^{\top}\mu) \le \E \norm{(A-A')^\top X}^2 \le \norm{A-A'}^2 \E[\norm{X}^2]. 
\end{equation}

For the second term, let $(X, X')$ be an optimal coupling for $\wass_2(\mu, \mu')$. Recall that $x \mapsto (A')(A')^{\top} x$ is the orthogonal projection onto the column space of $A'$ and hence is $1$-Lipschitz. Since $(A')^{\top}(A') = I_d$, we have
\begin{equation*}
\begin{split}
\wass_2( (A')_{\#}^{\top} \mu, (A')_{\#}^{\top} \mu') &\leq \E[ \| (A')^{\top} (X - X') \|^2 ]^{1/2} \\
&= \E[ (X - X')^{\top} (A')(A')^{\top} (X - X')]^{1/2} \\
&\leq \E[\|X - X'\|^2]^{1/2} = \wass_2(\mu, \mu').
\end{split}
\end{equation*}
Thus, the bound \cref{eqn:stability} has been proved.
\end{proof}

On the other hand, for each $A \in \Hplanes$, $A_{\#} \nu$ is a distribution on the upward space $\R^n$. Using the Euclidean cost $c_{\X}(x, w) := \|x - w\|^2$ on $\R^n$, we may define
\begin{equation*}
\begin{split}
\wass_2(\mu, A_{\#} \nu) &= \inf_{\gamma \in \Pi(\mu, \nu)} \left( \int c_{\X}(x, Az) d \gamma(x, z) \right)^{1/2},
\end{split}
\end{equation*}
where the use of $\Pi(\mu, \nu)$ (in place of $\Pi(\mu, A_{\#}\nu)$) follows from the argument of \cref{lem:existence}(i). This gives rise to the {\it upward alignment problem} with the upward $2$-Wasserstein loss
\begin{equation} \label{eqn:2.Wasserstein.upward}
\wass_2^{\uparrow}(\mu, \nu) := \inf_{A \in \Hplanes} \wass_2(\mu, A_{\#}\nu).
\end{equation}
This is another Wasserstein alignment problem where the roles of $\X$ and $\Z$ are reversed. This structure is specific to the Euclidean setting a matrix $A \in \mathbb{R}^{n \times d}$ defines two linear mappings $A: \R^n \rightarrow \R^d$ and $A^{\top}: \R^d \rightarrow \R^n$.

When dealing with high-dimensional data sets, it is common to normalize the data before performing statistical analysis. Under the following normalization condition, we will show that the downward and upward problems are, in fact, equivalent up to an additive constant.

\begin{assumption} \label{asmp:Euclidean2}
We assume that $\mu \in \Prob_2(\R^n)$ and $\nu \in \Prob_2(\R^d)$ have been normalized to have zero means and identity covariance matrices. Specifically, we assume
\begin{equation} \label{eqn:Euclidean.measure.normalization}
\begin{split}
&\int x \,d \mu(x) = 0_n, \quad \int xx^{\top} d\mu(x) = I_n,\\
&\int z \, d \nu(z) = 0_d, \quad \int zz^{\top} d\nu(z) = I_d.
\end{split}
\end{equation}
\end{assumption}

\begin{remark}
Our Euclidean setting is closely related to Procrustes alignment problem discussed in \cref{eg:Procrustes}, and the inner product Gromov--Wasserstein problem in \cref{eg:GW}. In  \cite{Alvarez2019} where the measures are assumed to be discrete, the normalization \eqref{eqn:Euclidean.measure.normalization} (also called {\it whitening}) is used to obtain an explicit update involving the singular value decomposition (see e.g.~their Lemma 4.2). In our paper, we consider general duality results that hold for general distributions that are not necessarily discrete. At our generality, it is unlikely to have explicit solutions/updates based on the singular value decomposition or related concepts.
\end{remark}


\begin{proposition}[Equivalence of downward and upward problems] \label{prop:Euclidean.equivalence}
Suppose \cref{asmp:Euclidean2} holds. Let $c_{\X}$ and $c_{\Z}$ be the Euclidean square costs on $\X = \R^n$ and $\Z = \R^d$ respectively. Then, for any $A \in \Hplanes$ and $\gamma \in \Pi(\mu, \nu)$, we have
\begin{equation} \label{eqn:equivalence}
\int c_{\X}(x, Az) d \gamma(x, z) = \int c_{\Z}(A^{\top} x, z) d \gamma(x, z) + (n - d).
\end{equation}
In particular, we have (denoting $\wass^{\downarrow, 2}_2 = ( \wass^{\downarrow}_2 )^2$ and similarly for $\wass^{\uparrow, 2}_2$)   
\[
\wass^{\uparrow, 2}_2(\mu, \nu) = \wass_2^{\downarrow, 2}(\mu, \nu) + (n - d),
\]
and $A \in \Hplanes$ is optimal for $\wass^{\uparrow}_2(\mu, \nu)$ if and only if it is optimal for $\wass_2^{\downarrow}(\mu, \nu)$.
\end{proposition}
\begin{proof}
Let $A \in \Hplanes$ and let $H = \mathrm{range}(A) \subset \R^n$ be the column space of $A$. Recall from \cref{eqn:orthogonal.projection} that $\proj_H x = AA^{\top} x$.
 For any $x\in \rr^p$ and $z \in \rr^d$, we have
\begin{equation*}
\begin{split}
c_{\X}(x, Az) &=\norm{x - Az}^2= \norm{x - \proj_H(x)}^2 + \norm{\proj_H(x) - Az}^2 \\
&= \norm{(I-AA^\top)x}^2 + \norm{AA^\top x - Az}^2.
\end{split}
\end{equation*}
Extend $A$ to an $n \times n$ orthonormal matrix $\bar{A} = \begin{bmatrix} A & B \end{bmatrix}$. Since norm does not change under a change of orthonormal basis, we have
\begin{equation*}
\begin{split}
c_{\X}(x, Az) &= \norm{(I-AA^\top)x}^2+ \norm{(\bar{A})^\top AA^\top x - (\bar{A})^\top Az}^2\\
&= x^\top (I-AA^\top) x + \norm{(\bar{A})^\top AA^\top x - (\bar{A})^\top Az}^2.
\end{split}
\end{equation*}
The last equality is due to the fact that $(I-AA^\top)$ is a projection matrix and, therefore, symmetric and idempotent. Now, both $AA^\top x$ and $Az$ are elements of $H$. Hence, they are both orthogonal to the last $n - d$ columns of $\bar{A}$. Therefore, 
\begin{equation} \label{eq:c-up-down}
c_{\X}(x, Az) = x^\top (I-AA^\top) x+ c_{\Z}(A^{\top}x,z). 
\end{equation}
The first term on the right depends only on $x$. From the normalization \cref{eqn:Euclidean.measure.normalization} and the identity $\Tr \left(A^\top A \right) = d$, for any $\gamma \in \Pi(\mu, \nu)$ we have
\[
\int x^\top (I-AA^\top) x \, d\gamma(x, y) = \int x^\top (I-AA^\top) x \,d \mu(x) = n-d, 
\]
This, in conjunction with~\cref{eq:c-up-down}, gives \cref{eqn:equivalence} from which the other statements are immediate.
\end{proof}

In \cref{sec:appendix.1} we illustrate the downward and upward alignment problems with a simulated example, showing that the optimization landscape can be highly non-convex.

\section{Generalized Kantorovich duality}
\label{sec:duality}
Consider the Wasserstein alignment problem
\begin{equation} \label{eqn:downward.problem.double.inf}
\wass_c^{\T, R}(\mu, \nu) = \inf_{\theta \in \Theta} \left\{ \inf_{\gamma \in \Pi(\mu, \nu)} \int_{\X \times \Z} c(T_{\theta}x, z) d \gamma(x, z) + R(\theta) \right\}
\end{equation}
as defined in \cref{sec:intro}. Here, we use \cref{lem:existence}(i) to optimize over couplings of $(\mu, \nu)$. In this section, we study the convex relaxation \cref{eq:convexify} under \cref{asmp:gendual} and use it to derive \cref{thm:duality} and \cref{cor:optimality}. We also prove \cref{thm:duality2} which is a refinement of \cref{thm:duality} using $\overline{c}$-concave functions.

\subsection{A convex relaxation}
To obtain a convex relaxation \cref{eqn:downward.problem.double.inf}, we allow $\theta$ to be randomized. In \cref{eq:convexify}, the key idea is to exploit independence. For clarity, we state the feasible set formally in a definition.

\begin{definition}[Feasible set for~\cref{eq:convexify}]
Given $\mu \in \Prob(\X)$ and $\nu  \in\Prob(\Z)$, define $\Upsilon(\mu, \nu)$ to be the set of Borel probability measures on $\X \times \Theta \times \Z$ such that if $(X, \eta, Z) \sim \gamma \in \Upsilon(\mu, \nu)$ then (i) $X \sim \mu$ and $Z \sim \nu$, and (ii) $X$ and $\eta$ are independent. 
\end{definition}

It is straightforward to verify that $\Upsilon(\mu, \nu)$ is a convex and closed subset of $\Prob(\X \times \Theta \times \Z)$. To check convexity, let $\gamma_1, \gamma_2 \in \Upsilon(\mu, \nu)$ and consider the mixture $\bar{\gamma}:=\frac{1}{2}\gamma_1 + \frac{1}{2}\gamma_2$. Obviously, under $\bar{\gamma}$, $X \sim \mu$ and $Z\sim \nu$. For the remaining requirement, let the pushforward of $\gamma_1$ by $(x,A,z) \mapsto (x,A)$ be $\mu \otimes P_1$ and the same for $\gamma_2$ be $\mu \otimes P_2$. Then, the same pushforward of $\bar{\gamma}$ is 
\[
\frac{1}{2} \mu \otimes P_1 + \frac{1}{2} \mu \otimes P_2 = \mu \otimes \left( \frac{1}{2} P_1 + \frac{1}{2} P_2  \right)= \mu \otimes \bar{P}.
\]
Since $\bar{P}$ is a probability on $\Hplanes$, we are done. To check closedness, suppose $\gamma_n \in \Upsilon(\mu, \nu)$ and $\gamma_n \rightarrow \gamma$ weakly. Clearly $\gamma$ satisfies (i). To check (ii), let $f \in C(\X)$ and $g \in C(\Z)$ (which are bounded by compactness of $\X$ and $\Z$). By weak convergence and independence of $X$ and $\eta$ under $\gamma_n$, we have
\[
\E_{\gamma_n}[f(X)g(\eta)] \rightarrow \E_{\gamma}[f(X)g(\eta)]
\]
and
\[
\E_{\gamma_n}[f(X)g(\eta)] = \E_{\gamma_n}[f(X)] \E_{\gamma_n}[g(\eta)] \rightarrow \E_{\gamma}[f(X)] \E_{\gamma}[g(\eta)].
\]
Since $f$ and $g$ are arbitrary, $X$ and $\eta$ are independent under $\gamma$. Hence $\eta \in \Upsilon(\mu, \nu)$.

Hence, \cref{eq:convexify} is an infinite-dimensional linear programming problem in the variable $\gamma$. A standard compactness argument using Prokhorov's theorem shows that \cref{eq:convexify} admits an optimal coupling. Note that if $\gamma_0 \in \Pi(\mu, \nu)$ and $\theta_0 \in \Theta$, we may define uniquely $\gamma \in \Upsilon(\mu, \nu)$ such that if $(X, \eta, Z) \sim \gamma$ then $(X, Z) \sim \gamma_0$ and $\eta = \theta_0$ almost surely.  Clearly,
\[
\int_{\X \times \Theta \times \Z} \left( c\left(T_{\theta}x, z\right) + R(\theta) \right) d \gamma(x, \theta, z) = \int_{\X \times \Z}  c(T_{\theta_0}x, z) \, d \gamma_0(x, z) + R(\theta_0).
\]
Thus \cref{eq:convexify} is indeed a convex relaxation of \cref{eqn:downward.problem.double.inf}. The following lemma shows that our relaxation does not change the optimal value. Moreover, its proof shows that there always exists an optimal $\gamma_* \in \Upsilon(\mu, \nu)$ for \cref{eqn:downward.problem.double.inf} under which $\eta$ is deterministic. The main advantage is that the convexity of \cref{eq:convexify} allows us to formulate a tractable dual problem.

\begin{lemma} \label{lem:Krelax}
Under \cref{asmp:gendual} we have $\overline{\wass}_c^{\T, R}(\mu, \nu) = \wass_c^{\T, R}(\mu, \nu)$.
\end{lemma}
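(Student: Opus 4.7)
The plan is to prove the two inequalities $\overline{\wass}_c^{\downarrow, R}(\mu, \nu) \le \wass_c^{\downarrow, R}(\mu, \nu)$ and the reverse separately. The direction $\le$ is essentially already recorded in the paragraph immediately preceding the lemma: for any $\theta_0 \in \Theta$ and $\gamma_0 \in \Pi(\mu, \nu)$, the product measure $\gamma := \gamma_0 \otimes \delta_{\theta_0}$ lies in $\Upsilon(\mu, \nu)$ (the constant $\eta \equiv \theta_0$ is trivially independent of $(X,Z)$), and substituting it in the objective of \eqref{eq:convexify} yields $\int c(T_{\theta_0} x, z) \, d\gamma_0(x, z) + R(\theta_0)$. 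Taking the infimum first over $\gamma_0$, which by Lemma~\ref{lem:existence}(i) equals $\wass_c((T_{\theta_0})_{\#}\mu, \nu) + R(\theta_0)$, and then over $\theta_0 \in \Theta$ yields the upper bound.

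For the reverse inequality, the key is a disintegration. Given $\gamma \in \Upsilon(\mu, \nu)$, let $\rho \in \Prob(\Theta)$ denote its $\Theta$-marginal and write $\gamma(dx, d\theta, dz) = \rho(d\theta) \, \gamma_\theta(dx, dz)$ via regular conditioning on $\eta = \theta$, which is valid since $\X, \Theta, \Z$ are Polish. The independence of $\eta$ from the $(X,Z)$-pair inherent in the definition of $\Upsilon(\mu, \nu)$ forces $\gamma_\theta \in \Pi(\mu, \nu)$ for $\rho$-a.e.\ $\theta$. By Fubini,
\[
\int_{\X \times \Theta \times \Z} (c(T_\theta x, z) + R(\theta)) \, d\gamma = \int_\Theta \left[ \int_{\X \times \Z} c(T_\theta x, z) \, d\gamma_\theta(x, z) + R(\theta) \right] d\rho(\theta).
\]
For each such $\theta$, Lemma~\ref{lem:existence}(i) gives $\int c(T_\theta x, z) \, d\gamma_\theta \ge \wass_c((T_\theta)_{\#}\mu, \nu)$; combining with the trivial bound $\int g \, d\rho \ge \inf_\Theta g$ produces
\[
\int_{\X \times \Theta \times \Z} (c(T_\theta x, z) + R(\theta)) \, d\gamma \ge \inf_{\theta \in \Theta} \left\{ \wass_c((T_\theta)_{\#}\mu, \nu) + R(\theta) \right\} = \wass_c^{\downarrow, R}(\mu, \nu).
\]
Taking the infimum over $\gamma \in \Upsilon(\mu, \nu)$ on the left-hand side finishes the argument.

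The only real obstacles are making the disintegration fully rigorous (routine in the Polish setting) and verifying that $\theta \mapsto \wass_c((T_\theta)_{\#}\mu, \nu)$ is Borel measurable, which follows at once from Lemma~\ref{lem:existence}(ii). As a bonus, tracing when equality holds throughout the chain above shows that an optimal $\gamma_*$ for $\overline{\wass}_c^{\downarrow, R}(\mu, \nu)$ may always be chosen with $\eta$ deterministic: take $\gamma_* = \gamma_0^* \otimes \delta_{\theta_*}$, where $\theta_*$ is optimal for $\wass_c^{\downarrow, R}(\mu, \nu)$ (existence supplied by Proposition~\ref{prop:existence}(i)) and $\gamma_0^*$ is an optimal coupling for $\wass_c((T_{\theta_*})_{\#}\mu, \nu)$.
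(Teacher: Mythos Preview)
Your proof of the reverse inequality contains a genuine error. You write that ``the independence of $\eta$ from the $(X,Z)$-pair inherent in the definition of $\Upsilon(\mu,\nu)$ forces $\gamma_\theta \in \Pi(\mu,\nu)$,'' but the definition of $\Upsilon(\mu,\nu)$ requires only that $X$ and $\eta$ be independent, \emph{not} that the pair $(X,Z)$ and $\eta$ be independent. Consequently, after disintegrating $\gamma = \rho(d\theta)\,\gamma_\theta(dx,dz)$, the $X$-marginal of $\gamma_\theta$ is indeed $\mu$, but the $Z$-marginal of $\gamma_\theta$ is the conditional law of $Z$ given $\eta=\theta$, which need not equal $\nu$. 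A simple counterexample: take $\X=\Z=\{0,1\}$, $\Theta=\{a,b\}$, $\mu=\nu$ uniform, and let $(X,Z)$ be independent uniform with $\eta=a$ when $Z=0$ and $\eta=b$ when $Z=1$. Then $X$ and $\eta$ are independent so $\gamma\in\Upsilon(\mu,\nu)$, yet $\gamma_a$ has $Z$-marginal $\delta_0\neq\nu$. In such cases the bound $\int c(T_\theta x,z)\,d\gamma_\theta \ge \wass_c((T_\theta)_\#\mu,\nu)$ can fail outright.

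The paper circumvents this obstacle by a different argument: it chooses an optimal $\gamma_*$ that is an extreme point of the convex compact set $\Upsilon(\mu,\nu)$ (via Krein--Milman), and then shows that the $\Theta$-marginal of any such extreme point must itself be extreme in $\Prob(\Theta)$, hence a Dirac mass $\delta_{\theta_*}$. Once $\eta$ is a.s.\ constant, the $(X,Z)$-marginal of $\gamma_*$ automatically lies in $\Pi(\mu,\nu)$ and the reverse inequality follows. Your disintegration approach would be valid under the stronger hypothesis that $\eta$ is independent of $(X,Z)$, but that is not what is assumed here, and the weaker independence is exactly what is exploited in the duality proof through the class $\CEF_\mu$.
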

\begin{proof}
From the discussion above, we have $\overline{\wass}_c^{\T, R}(\mu, \nu) \leq \wass_c^{\T, R}(\mu, \nu)$. To show the reverse inequality, let $\gamma_* \in \Upsilon(\mu, \nu)$ be optimal for \cref{eq:convexify}. Since the map
\[
\gamma \mapsto \int_{\X \times \Theta \times \Z} \left( c\left(T_{\theta}x, z\right) + R(\theta) \right) d\gamma
\]
is linear in $\gamma$, and $\Upsilon(\mu, \nu)$ is convex and compact, by the Krein--Milman Theorem we may choose $\gamma_*$ to be an extreme point of $\Upsilon(\mu, \nu)$. By an abuse of notations, regard $\eta$ as the coordinate map $\eta: \X \times \Theta \times \Z \rightarrow \Theta$, and let $P_* = \eta_{\#} \gamma_* \in \Prob(\Theta)$. 

We claim that $P_*$ is an extreme point of $\Prob(\Theta)$. For if not, we may write $P_* = \frac{1}{2} P + \frac{1}{2} P'$ for distinct $P, P' \in \Prob(\Theta)$. If $(X, \eta, Z) \sim \gamma_*$, let $\gamma_*(dxdz|\theta)$ denote the conditional distribution of $(X,Z)$ given $\eta=\theta$. Define $\gamma(dxd\theta dz)=\gamma_*(dxdz|\theta) P(d\theta)$ and $\gamma'(dxd\theta dz)=\gamma_*(dxdz|\theta) P'(d\theta)$. That is, under both $\gamma, \gamma'$, the conditional distribution of $(X,Z)$, given $\eta$, is the same as that of $\gamma_*$. 
Then it follows that $\gamma_* = \frac{1}{2} \gamma + \frac{1}{2} \gamma'$, and that $\gamma$ and $\gamma'$ are distinct elements of $\Upsilon(\mu, \nu)$. Thus, $\gamma_*$ cannot be an extreme point, contradicting our assumption. Since the above construction depends on regular conditional distributions, one needs to guarantee that the $P$ and $P'$ measure of the support of $P_*$ is one. But this follows from the fact that the supports of the measures $P$ and $P'$ must be subsets of the support of $P_*$. Since any set that has a positive measure under $P$ (or $P'$), then must also have a positive measure under $P_*=\frac{1}{2}P + \frac{1}{2}P'$. Hence the construction using conditional distributions is well defined.

Now, since $P_*$ is an extreme point of $\Prob(\Theta)$, it must be a point mass: $P_* = \delta_{\theta_*}$ for some $\theta_* \in \Theta$. Let $\pi_* \in \Pi(\mu, \nu)$ be the law of $(X, Z)$ if $(X, \eta, Z) \sim \gamma_*$, under which $\eta = \theta_*$ a.s. Then
\begin{equation*}
\begin{split}
\overline{\wass}_c^{\T, R}(\mu, \nu) &= \int_{\X \times \Theta \times \Z} \left( c\left(T_{\theta}x, z\right) + R(\theta) \right) d\gamma_* \\
&= \int_{\X \times \Z} \left( c(T_{\theta_*}x, z) + R(\theta_*)\right) d \pi_*\\
&\geq \wass_c\left( (T_{\theta_*})_{\#} \mu, \nu\right) + R(\theta_*) = \wass_c^{\T, R}(\mu, \nu),
\end{split}
\end{equation*}
which is the desired inequality. In particular, $\theta_*$ is optimal for $\wass_c^{\T, R}(\mu, \nu)$.
\end{proof}

By \cref{lem:Krelax}, we still write $\wass_c^{\T, R}(\mu, \nu)$ even when we optimize over $\Upsilon(\mu ,\nu)$ instead of $\Pi(\mu, \nu)$. The independence of $X$ and $\eta$ under $\gamma \in \Upsilon(\mu, \nu)$ will be exploited using the function space $\CEF_{\mu}$ given in Definition \ref{defn:whatisCEF}. We are now ready to prove \cref{thm:duality}.

\begin{proof}[Proof of \cref{thm:duality}]
The proof closely mirrors the argument for the usual Kantorovich duality; see, for example, the proof of \cite[Theorem 1.3]{Villani2003}. First, note that if $(\xi,\psi)\in \CEF_{\mu} \times C(\Z)$ satisfies \cref{eq:dualconstraint}, then for any $\gamma \in \Upsilon(\mu, \nu)$ and $\theta \in \Theta$ we have
\begin{equation*}
\begin{split}
\int_{\X} \xi(x, \cdot) d\mu(x) + \int_{\Z} \psi(z) d\nu(z)& =\int_{\X \times \Theta \times \Z} (\xi(x, \theta) + \psi(z)) d\gamma(x, \theta, z) \\
&\leq  \int_{\X \times \Theta \times \Z} \left( c\left( T_{\theta} x, z \right) + R(\theta) \right)d \gamma.
\end{split}
\end{equation*}
Here $\int_{\X \times \Theta \times \Z} \xi(x, \theta) d \gamma = \int \xi(x, \cdot) d\mu$ by independence of $X$ and $\eta$ under $\gamma$. Taking infimum over $\gamma \in \Upsilon(\mu, \nu)$ shows that weak duality holds, i.e., $\leq$ holds in \cref{eq:cxdual}.

The main part of the proof is to show that there is no duality gap. Let $\mathcal{M}_+$ denote the set of all nonnegative Borel measures on $\X \times \Theta \times \Z$. Let $\pi_1:\X \times \Theta \times \Z\rightarrow \X$, $\pi_2:\X \times \Theta \times \Z \rightarrow \Theta$ and $\pi_3:\X \times \Theta \times \Z \rightarrow \Z$ denote the three canonical coordinate projections. Then $\Upsilon := \Upsilon(\mu, \nu)$ is a convex subset of this convex cone $\mathcal{M}_+$. Let $\chi_{\Upsilon(\mu, \nu)}$ denote its characteristic function (also called the convex indicator function). That is, for any $\gamma \in \mathcal{M}_+$, $\chi_{\Upsilon}(\gamma) = \infty$ if $\gamma \in \Upsilon(\mu, \nu)$ and otherwise $\chi_{\Upsilon}(\gamma) = \infty$. Then, we may write
\begin{equation} \label{eqn:optim.with.indicators}
\begin{split}
\wass_c^{\T, R}(\mu, \nu) &= \inf_{\gamma \in \Upsilon(\mu, \nu)} \int\left(  c\left( T_{\theta} x, z\right) + R(\theta) \right) d\gamma \\
&= \inf_{\gamma \in \mathcal{M}_+} \left\{ \int \left( c\left( T_{\theta} x, z\right) + R(\theta) \right) d\gamma + \chi_\Upsilon(\gamma) \right\}.
\end{split}
\end{equation}
Observe that  $\Upsilon = \Upsilon_1\cap \Upsilon_2$, where $\Upsilon_1$ is the set of all probability measures $\gamma$ on $\X \times \Theta \times \Z$ such that $(\pi_1)_\#\gamma=\mu$ and $(\pi_3)_\#\gamma=\nu$, and $\Upsilon_2$ is the convex set of all probability measures $\gamma$ such that $(\pi_1, \pi_2)_\#\gamma$ is a product measure.

It is well known (see \cite[page 22]{Villani2003}) that for $\gamma \in \mathcal{M}_+$ we have
\begin{equation} \label{eq:dual.coupling}
\chi_{\Upsilon_1}(\gamma) = \sup_{\phi \in C(\X), \psi \in C(\Z)}\left\{ \int \phi(x) d\mu(x) + \int \psi(z) d\nu(z) - \int (\phi(x) + \psi(z))d\gamma \right\}.
\end{equation}
Note that the last integral only depends on $\gamma$ via $(\pi_1, \pi_3)_{\#} \gamma$.

Let $\CEF_{\mu, 0} := \left\{ \zeta \in \CEF_{\mu} : \int \zeta(x, \cdot) d\mu = 0 \right\}$. We now claim that 
\begin{equation}\label{eq:dualind}
\chi_{\Upsilon_2}(\gamma) = \sup_{ \zeta \in \CEF_{\mu, 0}}\left\{ \int \zeta(x, \theta) d\gamma(x, \theta, z)   \right\},
\end{equation}
where the integral depends on $\gamma$ via $(\pi_1, \pi_2)_{\#} \gamma$. To verify \cref{eq:dualind}, first suppose that $\gamma \in \Upsilon_2$. Then, by the independence of the first two coordinates, for any $\zeta \in \CEF_{\mu, 0}$ we have $\int \zeta(x, \theta) d\gamma = \int \zeta(x, \cdot) d\mu = 0$. Now, consider the complementary case $\gamma \notin \Upsilon_2$, under which the first two coordinates are dependent. 
We will demonstrate that there exists some $\zeta\in \CEF_{\mu, 0}$ such that $\int \zeta(x, \theta) d\gamma >0$.  Then, by multiplying that $\zeta$ by a sequence of positive constants that tends to $+\infty$, we have $\sup_{ \zeta \in \CEF_{\mu, 0}} \int \zeta d\gamma=\infty$. Together, we have \cref{eq:dualind}. Suppose $(X, \eta, Z) \sim \gamma$. Since $X$ and $\eta$ are dependent, there exist bounded measurable $\alpha$ on $\X$ and $\beta$ on $\Theta$ such that $\E_{\gamma} [\alpha(X)] = \int \alpha d\mu = 0$ but $\E_{\gamma}[\alpha(X) \beta(\theta)] > 0$. By a standard density argument, we may assume $\alpha$ and $\beta$ to be continuous. Then $\zeta(x, \theta) =\alpha(x) \beta(\theta)$ belongs to $\CEF_{\mu, 0}$ and satisfies the requirement. Substituting \cref{eq:dual.coupling} and \cref{eq:dualind} into \cref{eqn:optim.with.indicators}, we have
\begin{equation*}
\begin{split}
\wass_c^{\T, R}(\mu, \nu) 
&=\inf_{\gamma \in \mathcal{M}_+}\sup_{\phi, \psi, \zeta}\left\{ \int \left( c(T_{\theta}x,z) + R(\theta) \right) d\gamma + \right.\\
& \hspace{1cm} \left. + \int \phi d\mu + \int \psi d\nu  - \int (\phi(x) + \psi(z))\, d\gamma + \int \zeta  d\gamma \right\},
\end{split}
\end{equation*}
where the supremum is over $(\phi, \psi, \zeta)\in  C(\X) \times C(\Z) \times \CEF_{\mu, 0}$. 

Observe that if $(\phi, \zeta) \in C(\X) \times \CEF_{\mu, 0}$, then $\xi(x, \theta) = \phi(x) + \zeta(x, \theta) \in \CEF_{\mu}$ and $\int \xi(x, \cdot) d \mu = \int \phi d\mu$. Conversely, any $\xi \in \CEF_{\mu}$ can be written as $\xi = \int \xi(x, \cdot) d \mu + \left( \xi(x, \theta) - \int \xi(x, \cdot) d\mu \right)$ where the first term (as a constant function in $x$) is an element $C(\X)$ and the second term is an element of $\CEF_{\mu, 0}$. Thus, we may combine $\phi$ and $\zeta$ and express $\wass_c^{\downarrow, R}(\mu, \nu) $ as
\begin{equation}\label{eq:pfduality}
\begin{split}
\inf_{\gamma\in \mathcal{M}_+}\sup_{\xi, \psi }\left\{ \int  \xi d\mu + \int \psi d\nu - \int \left(  \xi(x,\theta) + \psi(z) - c(T_{\theta} x,z) - R(\theta) \right)d\gamma \right\},
\end{split}
\end{equation}
where now the supremum is over $\xi \in \CEF_{\mu}$ and $\psi \in C(\Z)$.

Now we will apply the Fenchel--Rockafellar duality \cite[Theorem 1.9]{Villani2003} to switch the $\inf$ and the $\sup$. For the convenience of the reader, we follow mostly the notation in the proof of \cite[Theorem 1.3]{Villani2003}. Consider the Banach space $E=C\left( \mathcal{X} \times \Theta \times \mathcal{Z} \right)$ be the Banach space of (bounded) continuous functions $u = u(x, \theta, z)$ on $\mathcal{X} \times \Theta \times \mathcal{Z}$ equipped with the usual supremum norm. Since $\mathcal{X} \times \Theta \times \mathcal{Z}$ is compact by assumption, its topological dual $E^*$ is the space of all (finite) signed measures on $\mathcal{X} \times \Theta \times \mathcal{Z}$ normed by total variation. For $u \in E$, define the convex functions 
\[
\Xi(u) :=\begin{cases}
    &\int \xi(x, \cdot) d\mu + \int \psi(z) d\nu, \quad \text{if } u= \xi(x, \theta) + \psi(z) \text{ for } \xi \in \CEF_{\mu} \text{ and } \psi \in  C(\Z); \\
    & \infty, \quad \text{otherwise}. 
\end{cases}
\]
Note that since the intersection of $\mathcal{F}_{\mu}$ and $C(\Z)$ (identified as subspaces of $E$) is the set of constant functions, the integral $\int \xi(x, \cdot) d\mu + \int \psi(z) d\nu$ is independent of the decomposition $u = \xi + \psi$. Also, define 
\[
\Theta(u) :=\begin{cases}
            0, &\quad \text{if}\; u(x,\theta,z) \ge -c\left(T_{\theta} x, z \right) - R(\theta) \text{ on $\X \times \Theta \times \Z$};\\
            \infty, & \quad \text{otherwise}.
        \end{cases}
\]
Then, 
\begin{equation} \label{eqn:Theta.plus.Xi}
\inf_{u \in E}\left\{ \Theta(u) + \Xi(u)\right\}= - \sup\left\{ \int \xi(x, \cdot) d\mu + \int \psi d\nu \right\},
\end{equation}
where the sup is over $(\xi, \psi) \in \mathcal{F}_{\mu} \times C(\Z)$ satisfying \cref{eq:dualconstraint}. To apply the Fenchel-Rockafellar duality theorem, we need the existence of some $u_0 \in E$ such that $\Theta(u_0) < \infty$, $\Xi(u_0) < \infty$ and that $\Theta$ is continuous at $u_0$. Let $M$ be the supremum of $-c(T_{\theta} x, z) - R(\theta)$ over $\X \times \Theta \times \Z$, which is finite by the compactness and continuity assumptions. We can simply take $u_0$ be the constant function $u_0 \equiv M + 1$. The duality theorem now gives
\begin{equation} \label{eqn:Fenchel.Rockafellar}
\inf_{u \in E} \left\{ \Theta(u) + \Xi(u) \right\} = \max_{\gamma \in E^*} \left\{ - \Theta^*(-\gamma) - \Xi^*(\gamma) \right\}.
\end{equation}

To finish the proof, we compute the Legendre-Fenchel conjugates of $\Theta$ and $\Xi$. For any measure $\gamma \in E^*$, it follows exactly as in \cite[page 27]{Villani2003} that
\[
\Theta^*(-\gamma)=\begin{cases}
&\int \left( c\left( T_{\theta} x, z\right) + R(\theta) \right) d\gamma, \quad \text{if $\gamma \in \mathcal{M}_+$,}\\
& \infty, \quad \text{otherwise}.
\end{cases}
\]
We now claim that $\Xi^*=\delta_\Upsilon = \delta_{\Upsilon_1} + \delta_{\Upsilon_2}$. To see this, note
\begin{equation*}
\begin{split}
\Xi^*(\gamma)&=\sup_{u \in E}\left\{ \int u d\gamma - \Xi(u) \right\}\\
&= \sup_{\xi \in \CEF_{\mu}, \psi \in C(\Z)}\left\{ \int \left(\xi(x,\theta) + \psi(z) \right)d\gamma - \int \xi(x,\cdot) d\mu - \int \psi(z)d\nu \right\}.
\end{split}
\end{equation*}
Letting $\xi = 0$ and varying $\psi$ shows that, in order for the above supremum to be finite, $(\pi_3)_\# \gamma =\nu$. By restricting to $\xi(x,\theta)\equiv \xi(x)$ gives us $(\pi_1)_\#\gamma=\mu$ for $\Xi^*(\gamma)$ to be finite. Finally, repeating the argument in the proof of \cref{eq:dualind} tells us that, unless $(\pi_1, \pi_2)_\#\gamma$ is a product measure, the supremum cannot be finite. Clearly, if $\gamma \in \chi_{\Upsilon}$ then the expression in side the bracket is $0$. So $\Xi^*=\delta_\Upsilon$.

Substituting \cref{eqn:Theta.plus.Xi} and the above into \cref{eqn:Fenchel.Rockafellar} and rearranging, we have
\begin{equation*}
\begin{split}
\sup \left\{ \int \xi(x, \cdot) d\mu + \int \psi d\nu \right\} &= \min_{\gamma \in E^*} \left\{ \Theta^*(-\gamma) + \Xi^*(\gamma) \right\} \\
&= \min_{\gamma \in \mathcal{M}_+} \left\{ \int \left( c(T_{\theta} x, z) + R(\theta) \right) d \gamma + \chi_{\Upsilon}(\gamma) \right\} \\
&= \wass_c^{\downarrow, R}(\mu, \nu).
\end{split}
\end{equation*}
\end{proof}

\subsection{Double convexification}
A more compact form of the generalized Kantorovich duality can be obtained by using $c$-duality. Specifically, we will apply the so-called ``double-convexification trick'' to \cref{thm:duality}. We continue to work under \cref{asmp:gendual}. Recall the spaces $\cconv(\Z)$ and $\cbarconv(\Z)$ given in Definition \ref{defn:cconcave}.

\begin{lemma} \label{lem:c.concave.regularity}
Under \cref{asmp:gendual}, both $\cconv(\Z)$ and $\cbarconv(\Z)$ are subsets of $C(\Z)$.
\end{lemma}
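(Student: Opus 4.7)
The plan is to prove that any $c$-concave function $\psi$ (and similarly any $\overline{c}$-concave function) is in fact finite-valued and uniformly continuous on $\Z$, from which membership in $C(\Z)$ follows. I will focus on $\cconv(\Z)$; the argument for $\cbarconv(\Z)$ is symmetric, exchanging the roles of the two arguments of $c$.

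Write $\psi = \varphi^{\overline{c}}$ with $\psi \not\equiv -\infty$, so $\psi(y) = \inf_{z \in \Z}\{c(y,z) - \varphi(z)\}$. The first step is to upgrade the hypothesis $\psi \not\equiv -\infty$ to \emph{$\psi$ is finite everywhere}. By Assumption \ref{asmp:gendual}, $c$ is continuous on the compact set $\Z \times \Z$, hence bounded by some constant $M$. Pick $y_0 \in \Z$ with $\psi(y_0) \in \R$; then for every $z$,
\[
\varphi(z) \leq c(y_0, z) - \psi(y_0) \leq M - \psi(y_0),
\]
so $\varphi$ is bounded above. Consequently, for each $y \in \Z$,
\[
\psi(y) = \inf_z\{c(y,z) - \varphi(z)\} \geq -M - (M - \psi(y_0)) = \psi(y_0) - 2M,
\]
so $\psi$ is bounded (in particular finite) on $\Z$.

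The second step is uniform continuity. Since $\Z \times \Z$ is compact and $c$ is continuous, $c$ admits a modulus of continuity $\omega_c$ with $\omega_c(t) \to 0$ as $t \to 0^+$, so that $|c(y,z) - c(y',z)| \leq \omega_c(d_\Z(y, y'))$ for all $y, y', z \in \Z$. For any $y, y' \in \Z$ and any $z \in \Z$,
\[
c(y, z) - \varphi(z) \leq c(y', z) - \varphi(z) + \omega_c(d_\Z(y, y')).
\]
Taking infimum over $z$ on both sides gives $\psi(y) \leq \psi(y') + \omega_c(d_\Z(y, y'))$; by symmetry we obtain $|\psi(y) - \psi(y')| \leq \omega_c(d_\Z(y, y'))$. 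Hence $\psi$ is uniformly continuous on $\Z$, so $\psi \in C(\Z)$, establishing $\cconv(\Z) \subset C(\Z)$. The identical argument applied to $\psi = \varphi^{c}$, $\psi(z) = \inf_y\{c(y,z) - \varphi(y)\}$, using continuity of $c$ in its \emph{second} argument uniformly in the first, gives $\cbarconv(\Z) \subset C(\Z)$.

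There is no real obstacle here; the only subtlety is ensuring that the potentially extended-real-valued $\varphi$ does not destroy the argument, which is handled by the boundedness of $c$ combined with the assumption $\psi \not\equiv -\infty$. This is the standard ``transfer of modulus of continuity through an infimum'' argument, which works cleanly because compactness of $\Z$ forces $c$ to be uniformly continuous and bounded.
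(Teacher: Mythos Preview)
Your proof is correct and follows essentially the same approach as the paper's: the paper simply cites the standard fact that $c$-concave functions inherit the modulus of continuity of $c$, then notes that finiteness at one point plus this uniform continuity gives finiteness everywhere, while you have written out these two steps in full detail. One tiny omission is the upper bound on $\psi$ (you only exhibit the lower bound before declaring $\psi$ bounded), but this follows immediately since $\psi(y_0)\in\R$ forces $\varphi(z_0)>-\infty$ for some $z_0$, whence $\psi(y)\leq c(y,z_0)-\varphi(z_0)<\infty$.
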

\begin{proof}
It is known (see \cite[Box 1.8, page 11]{santam2015ot}) that $c$-concave functions acquire the same modulus of continuity as $c$ which is uniformly continuous (also see the proof of \cref{lem:existence}). Since a $c$- or $\overline{c}$-concave function $\psi$ is finite at some point of $\mathcal{Z}$ by definition, it must be finite and uniformly continuous everywhere on $\mathcal{Z}$. 
\end{proof}

Recall from \cref{eqn:whatisIpsi} that $I_{\psi} : \Theta \rightarrow \R$ is defined for $\psi \in C(\Z)$ by
\[
I_\psi(\theta) = \int_{\X} \psi^{\overline{c}}(T_{\theta} x) d\mu(x) + R(\theta).
\]
By \cref{lem:c.concave.regularity}, $\psi^{\bar{c}}$ is (bounded and) uniformly continuous on $\Z$. From the continuity of $T_\theta$ and the bounded convergence theorem, we see that $I_{\psi} \in C(\Theta)$ and hence it attains its minimum $\min_{ \Theta} I_\psi$. Hence we may define $J_{\psi} \in C(\X \times \Theta)$ by
\begin{equation}\label{eq:jpsi}
J_\psi(x,\theta):=\psi^{\overline{c}}(T_{\theta} x) + R(\theta) - I_\psi(\theta) + \min_{ \Theta} I_\psi. 
\end{equation}
From the definition of $I_{\psi}$ we see that $J_{\psi} \in \CEF_{\mu}$; in fact, $\int J_\psi(x,\cdot)d\mu(x)= \min_{\Theta} I_\psi$.

Now, suppose $(\xi, \psi) \in \CEF_{\mu} \times C(\Z)$ satisfies the dual constraint \cref{eq:dualconstraint}:
\begin{equation}\label{eq:dualconstraint2}
\xi(x, \theta) \leq c\left(T_{\theta} x, z\right) + R(\theta) - \psi(z), \quad \mbox{for all} \;\; (x, \theta, z) \in \X \times \Theta \times \Z.
\end{equation} 
Taking infimum over $z \in \Z$ gives $\xi(x, \theta) \leq \psi^{\overline{c}}(T_{\theta}x) + R(\theta)$. From \cref{eq:jpsi} and \cref{eq:dualconstraint2}, we have
\[
J_{\psi}(x, \theta) \leq \psi^{\overline{c}}(T_{\theta}x) + R(\theta) \leq c(T_{\theta}x, z) + R(\theta) - \psi(z).
\]
Therefore, $(J_{\psi}, \psi)$ also satisfies the dual constraint. Moreover, integrating $\xi(x, \theta) \leq \psi^{\overline{c}}(T_{\theta}x) + R(\theta)$ over $\mu$ and minimizing over $\theta$ give
\[
\int \xi(x, \cdot) d\mu(x) \leq \min_{\theta \in \Theta} \int \left( \psi^{\overline{c}}(T_{\theta}x) + R(\theta) \right) d\mu(x) = \min_{\Theta} I_{\psi} = \int J_{\psi}(x, \cdot) d\mu(x).
\]
Thus, given $\psi \in C(\Z)$, we may let $\xi = J_{\psi} \in \CEF_{\mu}$ without decreasing the dual objective value. 

Next, we observe that $\psi$ itself can be taken to be a $\overline{c}$-concave function. For any $\psi \in C(\Z)$, we have that $\psi^{\overline{c}c} := (\psi^{\overline{c}})^c$ is $\bar{c}$-concave and satisfies $\psi^{\overline{c}c} \geq \psi$ and $\psi^{\overline{c}c\overline{c}} = \psi^{\overline{c}}$ (see the proof of \cite[Proposition 5.8]{V08}). Hence $J_{\psi^{\overline{c}c}} = J_{\psi}$ and
\[
\int J_{\psi}(x, \cdot) d \mu(x) + \int \psi d\nu \leq \int J_{\psi^{\overline{c}c}} (x, \cdot) d \mu(x) + \int \psi^{\overline{c}c} d\nu.
\]
Thus we only need to optimize over $\psi \in \cbarconv(\Z)$.

From the above discussion, for any $\mu \in \Prob(\X)$ and $\nu \in \Prob(\Z)$ we have
\begin{equation} \label{eqn:duality.sup.double.convex}
\begin{split}
\wass_c^{\downarrow, R}(\mu, \nu) &= \sup_{\psi \in \cbarconv(\Z)} \left\{ \int J_\psi(x, \cdot) d\mu  + \int \psi(z)d\nu  \right\} \\
 &= \sup_{\psi \in \cbarconv(\Z)} \left\{ \min_{\Theta} I_{\psi} + \int \psi(z) d \nu \right\}.
\end{split}
\end{equation}
On the other hand, for any $\theta \in \Theta$, the usual Kantorovich duality (see \cite[Proposition 1.11]{santam2015ot}) for the optimal transport problem $\wass_c( (T_{\theta})_{\#} \mu, \nu)$ between $(T_{\theta})_{\#} \mu$ and $\nu$ with cost $c$ gives
\begin{equation} \label{eqn:usual.Kantorovich}
\begin{split}
&\wass_c( (T_{\theta})_{\#} \mu, \nu) + R(\theta)\\
&= \max_{\psi  \in \cbarconv(\Z)} \left\{ \int \psi^{\overline{c}}(y) d ((T_{\theta})_{\#}\mu)(y) + \int \psi(z) d \nu(z) \right\} + R(\theta)\\
  &=  \max_{\psi  \in \cbarconv(\Z)} \left\{ I_{\psi}(\theta) + \int \psi(z) d \nu(z) \right\},
\end{split}
\end{equation}
since the term $R(\theta)$ is independent of the choice of $\psi$. Note that in \cref{eqn:usual.Kantorovich} the maximum is attained. For $\theta \in \Theta$ fixed, we call $(\psi_*^{\overline{c}}, \psi_*)$ a pair of {\it Kantorovich potentials} for $\wass_c( (T_{\theta})_{\#} \mu, \nu)$ if $\psi_* \in \cbarconv(\Z)$ is optimal for \cref{eqn:usual.Kantorovich}. Hence, from \cref{eq:wgencost} we also have
\begin{equation} \label{eqn:usual.Kantorovich2}
\begin{split}
\wass_c^{\T, R}(\mu, \nu) = \min_{\theta \in \Theta}     \max_{\psi  \in \cbarconv(\Z)} \left\{ I_{\psi}(\theta) + \int \psi(z) d \nu(z) \right\}.
\end{split}
\end{equation}
The following theorem shows that the supremum in \cref{eqn:duality.sup.double.convex} is a maximum, that is, the order of the min over $\theta$ and the max over $\psi$ in \cref{eqn:usual.Kantorovich2} can be switched. 

\begin{theorem}[Duality with $\overline{c}$-concave functions] \label{thm:duality2}
Under \cref{asmp:gendual}, we have
\begin{equation}\label{eq:cxdual2}
            \wass_c^{\T, R}(\mu, \nu)= \max_{\psi \in \cbarconv(\Z)} \left\{ \min_\Theta I_\psi + \int \psi(z)d\nu(z)  \right\},
\end{equation}
where the maximum is attained. 
\end{theorem}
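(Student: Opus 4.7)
The identity~\eqref{eqn:duality.sup.double.convex} has already been established in the discussion preceding the theorem, so the only remaining task is to show that the supremum is attained. Writing $F(\psi) := \min_{\Theta} I_\psi + \int_\Z \psi \, d\nu$, the plan is to take a maximizing sequence $(\psi_k) \subset \cbarconv(\Z)$, normalize it, extract a uniform limit via Arzel\`a--Ascoli, and verify both that the limit still lies in $\cbarconv(\Z)$ and that $F$ is continuous at it.

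The starting observation is that $F$ is invariant under the addition of constants: for $\psi' = \psi + a$, one checks directly from the definition of the $\overline{c}$-transform that $(\psi')^{\overline{c}} = \psi^{\overline{c}} - a$, so $I_{\psi'} = I_\psi - a$ and $\int \psi'\, d\nu = \int \psi\, d\nu + a$ cancel. Accordingly, after fixing some $z_0 \in \Z$, I would replace each $\psi_k$ by $\psi_k - \psi_k(z_0)$ so that $\psi_k(z_0) = 0$ without altering $F(\psi_k)$.

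Next, as noted in the proof of Lemma~\ref{lem:c.concave.regularity}, every $\overline{c}$-concave function inherits from the uniformly continuous cost $c$ a common modulus of continuity $\omega_c$ satisfying $|\psi(z) - \psi(z')| \le \omega_c(d_\Z(z, z'))$. Combined with $\psi_k(z_0) = 0$ and the compactness of $\Z$, this makes the family $\{\psi_k\}$ uniformly bounded and equicontinuous, and Arzel\`a--Ascoli delivers a subsequence (not relabeled) converging uniformly to some $\psi_* \in C(\Z)$.

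To conclude, I would invoke the elementary Lipschitz bound $\|f^{\overline{c}} - g^{\overline{c}}\|_\infty \le \|f - g\|_\infty$ (and likewise for the $c$-transform), which follows by swapping an infimum. Applied twice this gives $\psi_k^{\overline{c}c} \to \psi_*^{\overline{c}c}$ uniformly; since $\psi_k^{\overline{c}c} = \psi_k$ for $\overline{c}$-concave $\psi_k$, the limit satisfies $\psi_*^{\overline{c}c} = \psi_*$, placing $\psi_* \in \cbarconv(\Z)$. The same Lipschitz estimate yields $\psi_k^{\overline{c}} \to \psi_*^{\overline{c}}$ uniformly on $\Z$, hence $I_{\psi_k}(\theta) \to I_{\psi_*}(\theta)$ uniformly in $\theta \in \Theta$ (because $\mu$ is a probability measure), and therefore $\min_\Theta I_{\psi_k} \to \min_\Theta I_{\psi_*}$. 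Bounded convergence gives $\int \psi_k\, d\nu \to \int \psi_*\, d\nu$, so $F(\psi_k) \to F(\psi_*)$ and $\psi_*$ realizes the supremum. The one delicate point is the translation invariance of $F$, which rescues Arzel\`a--Ascoli from the fact that $\cbarconv(\Z)$ is not bounded in $C(\Z)$; everything else is routine continuity of $c$- and $\overline{c}$-transforms under uniform convergence.
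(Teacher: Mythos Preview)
Your proposal is correct and follows essentially the same Arzel\`a--Ascoli argument as the paper's own proof: normalize a maximizing sequence by translation, use the common modulus of continuity of $\overline{c}$-concave functions to extract a uniform limit, and pass to the limit in both terms of the objective. You supply a bit more detail than the paper (the explicit translation invariance of $F$, the Lipschitz bound $\|f^{\overline{c}} - g^{\overline{c}}\|_\infty \le \|f - g\|_\infty$, and the verification that $\psi_* \in \cbarconv(\Z)$ via $\psi_*^{\overline{c}c} = \psi_*$), but the structure is the same.
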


\begin{proof}
From the discussion above, \cref{thm:duality} implies \cref{eqn:duality.sup.double.convex} which has a supremum over $\psi \in  \cbarconv(\Z)$. It remains to show that the supremum is attained. We give only a sketch here since the argument is well-established, see e.g.,~the proof of \cite[Proposition 1.1]{santam2015ot}. Let $(\psi_k) \subset \cbarconv(\Z)$ be a maximizing sequence, i.e., $\lim_{k \rightarrow \infty} \left( \int J_{\psi_k}(x, \cdot) d\mu + \int \psi d\nu\right) = \wass_c^{\T, R}(\mu, \nu)$. Since each $\psi_k$ shares the same modulus of continuity as that of $c$ (see the proof of \cref{lem:c.concave.regularity}), the sequence $(\psi_k)$ is equicontinuous. By shifting each $\psi_k$ by a constant, we may assume that $(\psi_k)$ is uniformly bounded. By the Arzela--Ascoli theorem, we can assume, at least through a subsequence, that $\lim_{k\rightarrow \infty}\psi_k = \psi_*$ uniformly for some $c$-concave function $\psi_*$. A similar argument (see the proof of \cite[Proposition 1.1]{santam2015ot}) shows that, by passing through a further subsequence, we have $\lim_{k\rightarrow \infty}\psi_k^{\overline{c}} = \psi_*^{\overline{c}}$ uniformly as well. It follows that $\lim_{k \rightarrow \infty} I_{\psi_k}(\cdot) = I_{\psi_{*}}(\cdot)$ uniformly and $\lim_{k \rightarrow \infty} \min_{\Theta} I_{\psi_k}(\cdot) = \min_{\Theta} I_{\psi_*}(\cdot)$. Thus, $\lim_{k \rightarrow \infty} J_{\psi_k} = J_{\psi_*}$ uniformly. It follows from the uniformly convergence that $\wass_c^{\downarrow, R} (\mu, \nu) = \int J_{\psi_*}(x, \cdot) d\mu + \int \psi_* d\mu$. So, the supremum is attained by $\psi_*$. 
\end{proof}

\begin{proof}[Proof of \cref{cor:optimality}]
We first prove the second inequalities in \cref{eqn:gap} which also proves the \textit{if} part of the optimality condition (the first inequality is immediate). Since $(\psi^{\overline{c}}, \psi)$ is a pair of Kantorovich potentials for $\wass_c\left( (T_{\theta})_{\#}\mu, \nu \right)$, we have
\begin{equation*}
\begin{split}
\wass_c\left( (T_{\theta})_{\#} \mu, \nu \right) + R(\theta) 
&= \int \psi^{\overline{c}}(y) d ((T_{\theta})_{\#} \mu)(y) + \int \psi(z) d \nu(z) + R(\theta)\\
&= \int \psi^{\overline{c}}\left(T_{\theta} x\right) d\mu(x) + R(\theta) + \int \psi(z)d\nu(z) \\
&= I_{\psi}(\theta) + \int \psi(z) d \nu(z) \\
&= \int J_{\psi}(x, \cdot) d\mu(x) + \int \psi(z) d \nu(z) + \left( I_{\psi}(\theta) -\min_{\Theta} I_{\psi}\right) \\
&\leq \wass_c^{\T, R}(\mu, \nu) + \left( I_{\psi}(\theta) -\min_{\Theta} I_{\psi}\right),
\end{split}
\end{equation*}
where the last inequality follows from \cref{thm:duality2}. 

We now argue the \textit{only if} part of the condition for optimality. From \cref{eqn:usual.Kantorovich2} and \cref{eq:cxdual2} it follows that $\wass_c^{\T, R}(\mu, \nu)$ is equal to
\[
 \min_{\theta \in \Theta}     \max_{\psi  \in \cbarconv(\Z)} \left\{ I_{\psi}(\theta) + \int \psi(z) d \nu(z) \right\} = \max_{\psi \in \cbarconv(\Z)} \left\{ \min_\Theta I_\psi + \int \psi(z)d\nu  \right\}.
\]
Let $\theta_*$ be optimal for $\wass_c^{\T, R}(\mu, \nu)$ and let $\psi_* \in \cbarconv(\Z)$ be optimal for the RHS. Then
\begin{equation}\label{eq:corpf2}
  \max_{\psi  \in \cbarconv(\Z)} \left\{ \int \psi^{\bar{c}}\left( T_{\theta_*}x \right)d \mu(x) + R(\theta) +  \int \psi(z) d \nu(z) \right\}= \min_\Theta I_{\psi_*} + \int \psi_*(z)d\nu. 
\end{equation}
Take $\psi=\psi_*$ in the LHS to get 
\[
 I_{\psi_*}(\theta_*)=\int \psi_*^{\bar{c}}\left( T_{\theta_*}x \right)d \mu(x) + R(\theta) \le \min_\Theta I_{\psi^*}.
\]
Since the reverse inequality is trivial, there must be equality above. Thus $\theta_* \in \argmin_\Theta I_{\psi^*}$.

Substituting in \cref{eq:corpf2} we get 
\begin{equation*}
\begin{split}
&\wass_c\left( (T_{\theta_*})_{\#} \mu, \nu \right) + R(\theta_*) = \wass_c^{\downarrow, R}(\mu, \nu)\\ 
&= \int \psi_*^{\bar{c}}\left( y \right)d \left((T_{\theta_*})_\#\mu \right)(y) + R(\theta_*) + \int \psi_*(z)d\nu(z).
\end{split}
\end{equation*}
Thus $\left( \psi_*, \psi_*^{\bar{c}}\right)$ is a pair of Kantorovich potentials for $\wass_c\left( (T_{\theta_*})_{\#}\mu, \nu \right)$. 
\end{proof}

Let us derive some consequences of \cref{eq:cxdual2}. Let $\psi_0 \in \cbarconv(\Z)$ and let $\theta_0 \in \Theta$. Let $G(\cdot)$ denote the optimality gap function for the dual objective function, i.e.
\[
G(\psi_0) := \max_{\psi \in \cbarconv(\Z)} \left\{ \min_\Theta I_\psi + \int \psi(z)d\nu  \right\} - \left( \min_\Theta I_{\psi_0} + \int \psi_0(z)d\nu  \right) \ge 0. 
\]
Also, consider the primal optimality gap function 
\[
\Delta(\theta_0):= \wass_c( (T_{\theta_0})_{\#} \mu, \nu) + R(\theta_0) - \wass_c^{\downarrow, R}(\mu, \nu) \ge 0.
\]
From \cref{eq:cxdual2}, it follows after some algebraic manipulations, 
\begin{equation*}
\begin{split}
&\wass_c( (T_{\theta_0})_{\#} \mu, \nu) + R(\theta_0) - \Delta(\theta_0) = \left( \min_\Theta I_{\psi_0} + \int \psi_0(z)d\nu  \right) + G(\psi_0),\quad \text{or}, \\
&\Delta(\theta_0) + G(\psi_0) = \left( \wass_c( (T_{\theta_0})_{\#} \mu, \nu) + R(\theta_0) \right) - \left( \min_\Theta I_{\psi_0} + \int \psi_0(z)d\nu  \right).
\end{split}
\end{equation*}
Let $\psi_0^*$ denote some $\overline{c}$-concave Kantorovich potential for $\wass_c( (T_{\theta_0})_{\#} \mu, \nu)$. Using a shift, if necessary, we can guarantee that $\int \psi^*_0(z)d\nu = \int \psi_0(z)d\nu$. It follows that
\begin{equation*}
\begin{split}
\Delta(\theta_0) + G(\psi_0) &= \int (\psi_0^*)^{\overline{c}} (T_{\theta_0}x) d\mu(x) + \int \psi_0^* d \nu + R(\theta_0) - \min_{\Theta} I_{\psi_0} - \int \psi_0 d \nu\\
&=  I_{\psi^*_0}(\theta_0)  -  \min_{\Theta} I_{\psi_0}.
\end{split}
\end{equation*}
Since the LHS is nonnegative, so must be the RHS. In particular, if $\psi_0=\psi_0^*$ we obtain a strengthening of \cref{eqn:gap} given by 
\begin{equation}\label{eqn:gap2}
\Delta(\theta_0) + G(\psi^*_0) =  I_{\psi^*_0}(\theta_0)  -  \min_{\Theta} I_{\psi^*_0}.
\end{equation}

\section{Orthogonal transformations of Euclidean spaces} \label{sec:Euclidean.further}

Consider the downward alignment problem~\cref{eqn:2.Wasserstein.downward} under the Euclidean setting in \cref{sec:Euclidean}. We continue to work under \cref{asmp:Euclidean}. We already know that a global minimizer of the map $A\in \Hplanes \mapsto \wass_2^2(A^\top_\#\mu, \nu)$ exists, thanks to \cref{prop:existence} (see the paragraph above \cref{lem:W2.estimates}), but there may be local minimizers as seen in the simulated example in \cref{sec:Euclidean}. 

We give a first-order condition for local minimizers of~\cref{eqn:2.Wasserstein.downward} using the structure of $\Hplanes$. For technical purposes, we will now assume, in addition to \cref{asmp:Euclidean}, that $\nu$ is absolutely continuous with respect to the Lebesgue measure, denoted by $\nu \in \Prob_{2, ac}(\R^d)$. By Brenier's theorem (see \cite[Section 6.2.3]{AGS05} for a general statement true for Hilbert spaces), for every choice of $A\in \Hplanes$ there is a $\nu$-a.e.~convex gradient $\nabla F_A$, where $F_A: \rr^d\rightarrow \rr\cup \{+\infty\}$ is convex and lower semicontinuous, such that the coupling $(\nabla F_A (Z), Z)$, $Z \sim \nu$, is optimal for $\wass_2(A_{\#}^{\top} \mu, \nu)$. We call $\nabla F_A$ the Brenier map from $\nu$ to $A_{\#}^{\top} \mu$. To extend it to a coupling of $(X,Z)$, require that the conditional distribution of $X$, given $Z=z$, be the same as the conditional distribution, under $\mu$, of $X$ given $A^\top X= \nabla F_A(z)$. Clearly, this preserves the marginal distribution of $X$. We will refer to the coupling constructed this way as the {\it optimal coupling} of $(\mu,\nu)$ for $A$. 

\begin{theorem}[First order condition]\label{thm:downfoc}
Let $\mu \in \Prob_2(\R^n)$ and $\nu \in \Prob_{2, ac}(\R^d)$. Let $A_*$ be a local minimizer of the map $A\in \Hplanes \mapsto \wass_2^2(A^\top_\#\mu, \nu)$ such that $(A_*^{\top})_{\#} \mu \in \Prob_{2,ac}(\R^d)$, and let $\nabla F_{A_*}(\cdot)$ be the $\nu$-a.s.~unique Brenier map pushforwarding $\nu$ to $(A_*^\top)_\# \mu$. Then the following cross-correlation constraint holds: 
\begin{equation} \label{eqn:covariance.condition}
\E\left[ \nabla_i F_{A_*}(Z) Z_j \right]= \E\left[ \nabla_j F_{A_*}(Z) Z_i \right], \quad 1 \leq i, j \leq d,
\end{equation}
where $(X, Z)$ are optimally coupled for $A_*$. In particular, if \cref{asmp:Euclidean2} holds, then under the same coupling we have
\begin{equation} \label{eqn:cross.correlation}
\Cor( (A_*^{\top} X)_i, Z_j ) = \Cor( (A_*^{\top} X)_j, Z_i ), \quad 1 \leq i, j \leq d.
\end{equation}
\end{theorem}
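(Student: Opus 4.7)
My plan is to exploit the Lie-group structure of the Stiefel manifold $\Hplanes$ by perturbing $A_*$ along one-parameter curves of the form $A(t) := A_* \exp(tK)$, where $K$ is an arbitrary $d \times d$ skew-symmetric matrix. Since $\exp(tK)$ is orthogonal, $A(t) \in \Hplanes$ for every $t \in \R$, so this is a globally defined smooth path in $\Hplanes$ with tangent $V := A_* K$ at $t = 0$. These $V$'s span a $\binom{d}{2}$-dimensional subspace of $T_{A_*}\Hplanes$, and testing against these directions alone will already deliver the symmetry \eqref{eqn:covariance.condition}; the remaining normal directions would give extra constraints that we do not need.

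The analytic core is an envelope inequality. Write $W(A) := \wass_2^2(A_\#^\top \mu, \nu)$ and let $\gamma_{A_*}$ denote the optimal coupling of $(\mu, \nu)$ for $A_*$ described just above the theorem. By Lemma~\ref{lem:existence}(i), $W(A) \le \int \|A^\top x - z\|^2\,d\gamma_{A_*}(x,z)$ for every $A \in \Hplanes$, with equality at $A = A_*$, so
\begin{equation*}
W(A(t)) - W(A_*) \;\le\; \int \bigl( \|A(t)^\top x - z\|^2 - \|A_*^\top x - z\|^2 \bigr)\, d\gamma_{A_*}(x,z).
\end{equation*}
The right-hand side is smooth in $t$ (dominated convergence applies using $\int \|x\|^2 d\mu, \int \|z\|^2 d\nu < \infty$ and the uniform boundedness of $\{A(t)\}$ in $\Hplanes$), and a short computation gives its derivative at $t = 0$ as $2 \langle V, M \rangle$, where $M := \int x (A_*^\top x - z)^\top d\gamma_{A_*}(x,z) \in \R^{n \times d}$ and $\langle \cdot,\cdot\rangle$ is the Frobenius inner product. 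Since $A_*$ is a local minimizer, $W(A(t)) - W(A_*) \ge 0$ for $|t|$ small; dividing by $t > 0$ and taking $\liminf$ as $t \downarrow 0$, then rerunning the argument with $-K$ in place of $K$, we conclude $\langle V, M \rangle = 0$ for every $V = A_* K$ with $K$ skew.

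Writing $\langle A_* K, M \rangle = \Tr(K^\top A_*^\top M)$ and letting $K$ range over all $d \times d$ skew-symmetric matrices forces $A_*^\top M$ to be symmetric. Now, under $\gamma_{A_*}$, the Brenier identity $A_*^\top X = \nabla F_{A_*}(Z)$ holds almost surely, so
\begin{equation*}
A_*^\top M \;=\; \E\bigl[ \nabla F_{A_*}(Z)\, \nabla F_{A_*}(Z)^\top\bigr] - \E\bigl[\nabla F_{A_*}(Z)\, Z^\top\bigr].
\end{equation*}
The first term is manifestly symmetric, hence so is the second, giving \eqref{eqn:covariance.condition}. Under Assumption~\ref{asmp:Euclidean2}, each $Z_j$ has zero mean and unit variance, and $\Cov(A_*^\top X) = A_*^\top \Cov(X) A_* = A_*^\top A_* = I_d$, so correlations reduce to raw expectations and \eqref{eqn:cross.correlation} follows immediately from \eqref{eqn:covariance.condition}. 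The main technical hurdle I foresee is precisely the envelope step: $W$ itself need not be differentiable at $A_*$, so one must work entirely with the smooth upper bound and sandwich it against the local-min inequality from both sides; the existence and $\nu$-a.e.\ uniqueness of the Brenier map $\nabla F_{A_*}$ together with the construction of the coupling $(X,Z)$ are already in the paragraph preceding the theorem and can be invoked directly.
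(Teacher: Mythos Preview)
Your proof is correct and uses the same one-parameter perturbation $A(t)=A_*e^{tK}$ (with $K\in\R^{d\times d}$ skew-symmetric) as the paper. The genuine difference is in how the derivative of $t\mapsto\wass_2^2((A(t)^\top)_\#\mu,\nu)$ at $t=0$ is extracted. The paper views $\rho_t:=(A(t)^\top)_\#\mu=(e^{-tK}A_*^\top)_\#\mu$ as an absolutely continuous curve in $(\Prob_2(\R^d),\wass_2)$ with initial velocity field $v_0(y)=-Ky$ and invokes the differentiation formula \cite[Corollary~10.2.7]{AGS05} for $\frac{d}{dt}\frac12\wass_2^2(\rho_t,\nu)$; this is precisely where the hypothesis $(A_*^\top)_\#\mu\in\Prob_{2,ac}(\R^d)$ enters. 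Your envelope argument instead freezes the optimal coupling $\gamma_{A_*}$, bounds $W(A(t))-W(A_*)$ above by the smooth function $\Phi(t)-\Phi(0)$ with $\Phi(t)=\int\|A(t)^\top x-z\|^2\,d\gamma_{A_*}$, and sandwiches against the local-minimum inequality. Both routes land on the same first-order condition $\E[(\nabla F_{A_*}(Z))^\top K(\nabla F_{A_*}(Z)-Z)]=0$ for all skew $K$, equivalently the symmetry of $A_*^\top M=\E[\nabla F_{A_*}(Z)(\nabla F_{A_*}(Z)-Z)^\top]$. What your approach buys is self-containment: it avoids the Wasserstein-calculus machinery and, as you may have noticed, never actually uses the assumption $(A_*^\top)_\#\mu\in\Prob_{2,ac}(\R^d)$, so it in fact proves \eqref{eqn:covariance.condition} under the weaker hypotheses $\mu\in\Prob_2(\R^n)$, $\nu\in\Prob_{2,ac}(\R^d)$ alone. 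The paper's route, on the other hand, makes the connection to metric-space calculus on $\Prob_2$ explicit and would generalize more readily to other curves $\rho_t$ not arising from a fixed coupling.
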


\begin{proof}
Let $A_* \in \Hplanes$ be a local minimum of the map $A\in \Hplanes \mapsto \wass_2^2(A^\top_\#\mu, \nu)$. We begin by constructing smooth perturbations of $A_*$ in $\Hplanes$. Let $\cO(n) := \{ \bar{A} \in \R^{n \times n} : \bar{A} \bar{A}^{\top} = I_n\}$ denote the set of all real $n\times n$ orthogonal matrices. For $\bar{A} \in \cO(n)$, we write $\bar{A} = \begin{bmatrix} A & A'\end{bmatrix}$ where $A \in \Hplanes$ and $A' \in \R^{n \times (n - d)}$. Given $A_*$, let $\bar{A}_* = \begin{bmatrix} A_* & A_*' \end{bmatrix} \in \cO(n)$ be such that its first $d$ columns coincide with $A_*$. Let $S \in \R^{n \times n}$ be skew-symmetric, i.e., $S^{\top} = -S$. For $t \in \R$, let $\bar{A}_t := e^{tS} \bar{A}_*$ and note that $\bar{A}_t \in \cO(n)$.\footnote{Recall that a fundamental property of skew-symmetric matrices is that their exponentials are orthogonal matrices with determinant 1.} Let $A_t := e^{tS} A_*$ be the first $d$ columns of $\bar{A}_t$. Then $(A_t)_{t \in \R}$ is a curve in $\Hplanes$ with $A_0 = A_*$. 

Consider a skew-symmetric $S \in \R^{n \times n}$ of the form $S = A_* C A_*^{\top}$, where $C \in \R^{d \times d}$ is skew-symmetric. Observe that $e^{tS} = A_* e^{tC} A_*^{\top}$ since $A_*^{\top} A_* = I_d$. With this $S$ we have $A_t = A_* e^{tC}$. For $t \in \R$, define $\rho_t := (A_t^{\top})_{\#} \mu = (e^{-tC} A_*^{\top})_{\#} \mu$. That is, each particle travels along the curve $y_t = e^{-tC} y_0$. It is easy to see that $(\rho_t)_{t \in \R}$ is an absolutely continuous curve in $(\Prob_{2, ac}(\R^d), \wass_2)$ with $\rho_0 = (A_*^{\top})_{\#} \mu$. Moreover, $(\rho_t)$ satisfies the continuity equation $\dot{\rho}_t + \nabla \cdot (v_t \rho_t) =0$ with $v_0(y) = -Cy$. Using the optimality of $A_*$ and \cite[Corollary 10.2.7]{AGS05} (this requires absolute continuity of $(A_*^{\top})_{\#} \mu$), we have
\begin{equation} \label{eqn:first.order.condition.pxp}
\begin{split}
0 &= \left. \frac{d}{d t} \right|_{t = 0} \frac{1}{2} \wass_2^2(\rho_t, \nu) = \int v_0( \nabla F_{A_*}(z)) \cdot (\nabla F_{A_*}(z) - z) \, d\nu(z) \\
&= \E \left[ (-C \nabla F_{A_*}(Z)) \cdot (\nabla F_{A_*}(Z) - Z) \right].
\end{split}
\end{equation}
Rearranging and using skew-symmetry, we have the following first order condition for any skew-symmetric $C \in \R^{d \times d}$:
\begin{equation} \label{eqn:first.order.condition.dxd}
\E\left[ (\nabla F_{A_*}(Z))^{\top} C  (\nabla F_{A_*}(Z) - Z)\right] = 0.
\end{equation}
For $1 \leq i, j \leq d$, let $C$ be the skew-symmetric matrix $C = e_i e_j^{\top} - e_j e_i^{\top} \in \R^{d \times d}$ where $(e_i)_{1 \leq i \leq d}$ is the standard basis of $\R^d$. Plugging this $C$ into \cref{eqn:first.order.condition.dxd} gives the first-order condition \cref{eqn:covariance.condition}.

Write $\nabla F_{A_*}(Z) = A_*^{\top} X$ where $(X, Z)$ are optimally coupled for $A_*$. Since $A_*^{\top} A_* = I_d$, $A_*^{\top} X$ (and also $Z$) has mean $0_d$ and covariance matrix $I_d$ under \cref{asmp:Euclidean2}. From this and \cref{eqn:covariance.condition} we obtain the cross-correlation condition \cref{eqn:cross.correlation}.
\end{proof}


\subsection{An explicit example} \label{sec:explicit.example}
In this subsection, we analyze an explicit example with $n = 2$ and $d = 1$ to illustrate the first-order condition \cref{eqn:covariance.condition} in \cref{thm:downfoc} as well as the optimality condition in \cref{cor:optimality}.

Let $\nu = N(0,1)$ be the one-dimensional standard normal distribution on $\R$. For $a\in \rr^2 \setminus \{0\}$ fixed, let $\mu$ be the normal mixture distribution $\frac{1}{2} N(a, I_2) + \frac{1}{2} N(-a, I_2)$. That is, if $X \sim \mu$, one may simulate it by first tossing a fair coin $\varepsilon \sim \text{Bernoulli}(1/2)$; then if $\varepsilon=+1$ then sampling from $N(a, I_2)$; otherwise, if $\varepsilon=0$ sampling from $N(-a, I_2)$.\footnote{Here $\mu$ and $\nu$ are not compactly supported and so \cref{cor:optimality} does not apply directly. Nevertheless, we will show that its conclusion holds in this case.} 

The elements of $\Hplanes$ are given by column unit vectors of the form  $\lambda = \begin{bmatrix} \cos(\theta) & \sin(\theta)\end{bmatrix}^\top$, $\quad \theta \in [0, 2\pi)$. For $\lambda \in \Hplanes$, we have
\begin{equation} \label{eqn:normal.mixture}
 \lambda_{\#}^\top\mu = \frac{1}{2} N\left( \lambda^\top a, 1\right) + \frac{1}{2} N\left( -\lambda^\top a, 1\right),
\end{equation}
which is again a normal mixture. If $\lambda^{\top} a = 0$, then $\lambda^{\top}_{\#}\mu = N(0, 1) = \nu$ and so $\wass_2( \lambda_{\#}^{\top}\mu, \nu) = 0$. Otherwise, we have $\lambda_{\#}^{\top} \mu \neq \nu$ and the Wasserstein distance is positive. Thus, $\wass_2^{\downarrow}(\mu, \nu) = 0$ and the set $H_a$ of optimal $\lambda \in \Hplanes$ consists of the two unit vectors orthogonal to $a$.

We first verify that the first-order condition \cref{eqn:covariance.condition} in \cref{thm:downfoc} holds. For $\lambda \in H_a$, the Brenier map from $\nu$ to $\lambda_{\#}^{\top} \mu = \nu$ is the identity map, that this, $\nabla F_\lambda(z) = F_\lambda'(z) z$. Since $d=1$, $i = j = 1$ and \cref{eqn:covariance.condition} reduces to $\E[Z^2]=\E [Z^2]$, which is obviously true.  

For \cref{cor:optimality}, the argument is more involved but may have some independent interest. Without loss of generality, we use the downward cost $c(y, z) = \frac{1}{2}(y - z)^2$. From the symmetry of $c$ we have $\psi^c = \psi^{\overline{c}}$ and $\cconv(\R) = \cbarconv(\R)$, so that $I_{\psi}(\lambda) = \int \psi^c(\lambda^{\top}x) d\mu(x)$ when $\psi$ is $c$-concave. From \cite[Proposition 1.21]{santam2015ot}, $\psi(z)$ is $c$-concave if and only if $\frac{1}{2}z^2 - \psi(z)$ is convex and lower semicontinuous, and $\psi^c(y) = \frac{1}{2}y^2 - \psi^*(y)$ where $\psi^*$ is the convex conjugate of $\psi$. 

First, let $\lambda \in H_a$ be optimal, so that $\lambda_{\#}^{\top} \mu = \nu$. Then $(\psi^c, \psi) = (0, 0)$ is a pair of Kantorovich potentials. Clearly, we have $I_{\psi}(\lambda) \equiv 0$ and, trivially, $\lambda \in \argmin_{\Hplanes}  I_{\psi}$. 

Next, consider $\lambda \notin H_a$, and let $(\psi^c, \psi)$ be a pair of Kantorovich potentials for $\wass_2(\lambda_{\#}^{\top} \mu, \nu)$. Since $\mu$ and $\nu$ are fully supported, from Brenier's theorem $\psi^c$ is unique up to an additive constant. To verify that $\lambda \notin \argmin_{\Hplanes} I_{\psi}$, we need the following claim whose (elementary but somewhat lengthy) proof can be found in \cref{sec:appendix.2}. 

\begin{proposition} \label{prop:psiclaim}
The function $\psi^c$ is strictly convex. 
\end{proposition}



Given \cref{prop:psiclaim}, the rest of the argument proceeds as follows. Since $\psi^c$ is strictly convex, the function $\rho \mapsto \int \psi^c(y) d\rho(y)$ is strictly displacement convex on $\Prob_{2, ac}(\R)$ with respect to McCann's displacement interpolation \cite[Theorem 5.15]{Villani2003}. For $b \in \Hplanes$, let $\rho_0=N(b^{\top} a, 1)$ and $\rho_1=N(-b^{\top} a, 1)$. Then the McCann displacement interpolation between $\rho_0$ and $\rho_1$ at time $\frac{1}{2}$ is precisely $\rho_{1/2}=N(0,1)$. Thus, from \cref{eqn:normal.mixture} and geodesic convexity, we have
\begin{equation*}
\begin{split}
\int \psi^c(\lambda^\top x) d\mu(x) = \frac{1}{2} \int \psi^c(y)d\rho_0(y) + \frac{1}{2} \int \psi^c(y) d\rho_1(y) \geq \int \psi^c(y) d\rho_{1/2}(y) 
\end{split}
\end{equation*}
If $b = \lambda$ (so that $\lambda^{\top}a \neq 0$), then $\rho_0 \neq \rho_1$ and the inequality is strict. 
On the other hand, if $b = \lambda_* \in H_a$ then $\rho_0 = \rho_1$ and equality holds. It follows that
\[
I_{\psi}(\lambda) = \int \psi^c(\lambda^{\top}x) d \mu(x) > \int  \psi^c(y) d \rho_{1/2}(y) = \int \psi^c(\lambda_*^{\top} x) d\mu(x) > I_{\psi}(\lambda_*).
\]
This proves that $\lambda \notin \argmin_{\Hplanes} I_{\psi}$.

\appendix

\section*{Appendix}
\section{Further examples} \label{sec:further.examples}

\begin{example}[Encoder and decoder perspective] 
	Consider $\X\subset \R^n$ and $\Z \subset \R^d$, and $T_\theta: \R^n \to \R^d$ be a neural network parameterized by $\theta \in \Theta \subset \R^k$. In the framework of variational autoencoders (VAEs)~\cite{VAE, VAEintro}, we can interpret $T_\theta$ as an {\it encoder} when $n \gg d$, mapping high-dimensional input data to a lower-dimensional latent space. In contrast, when $d \gg n$, $T_\theta$ can be viewed as a {\it decoder}, transforming low-dimensional latent representations into high-dimensional outputs.
	
	Unlike VAEs, which typically rely on the Kullback–Leibler (KL) divergence (or its variational approximation) to train the neural network, our formulation in~\cref{eq:wgencost} employs the Wasserstein distance to guide the training of $T_\theta$. This approach aligns with Wasserstein-based generative models~\cite{Bousquet2017, Tolstikhin2017}, which have been shown to offer superior stability and more meaningful latent representations by leveraging optimal transport principles.
\end{example}

There are other applications where we believe our techniques might be useful. One such application is the SE(3)-Transformer~\cite{Fuchs-2020} which is a specialized deep learning architecture designed for 3D geometric data, ensuring that predictions remain equivariant under rigid transformations (rotations and translations). It extends the transformer architecture to handle spatially structured data, such as molecular structures, protein folding, point clouds, and physical simulations. For example, DeepMind's AlphaFold 2 uses SE(3)-equivariant neural networks to refine protein structures. This ensures that the predicted protein conformation does not depend on the coordinate system.
We believe that our method can be used to address alignment problems~\cite{Bryant-2022, Nguyen-2024} arising in this context.

\section{Simulated example of the Euclidean case} \label{sec:appendix.1}
\begin{figure}[t!]
	\centering
	\includegraphics[scale = 0.55]{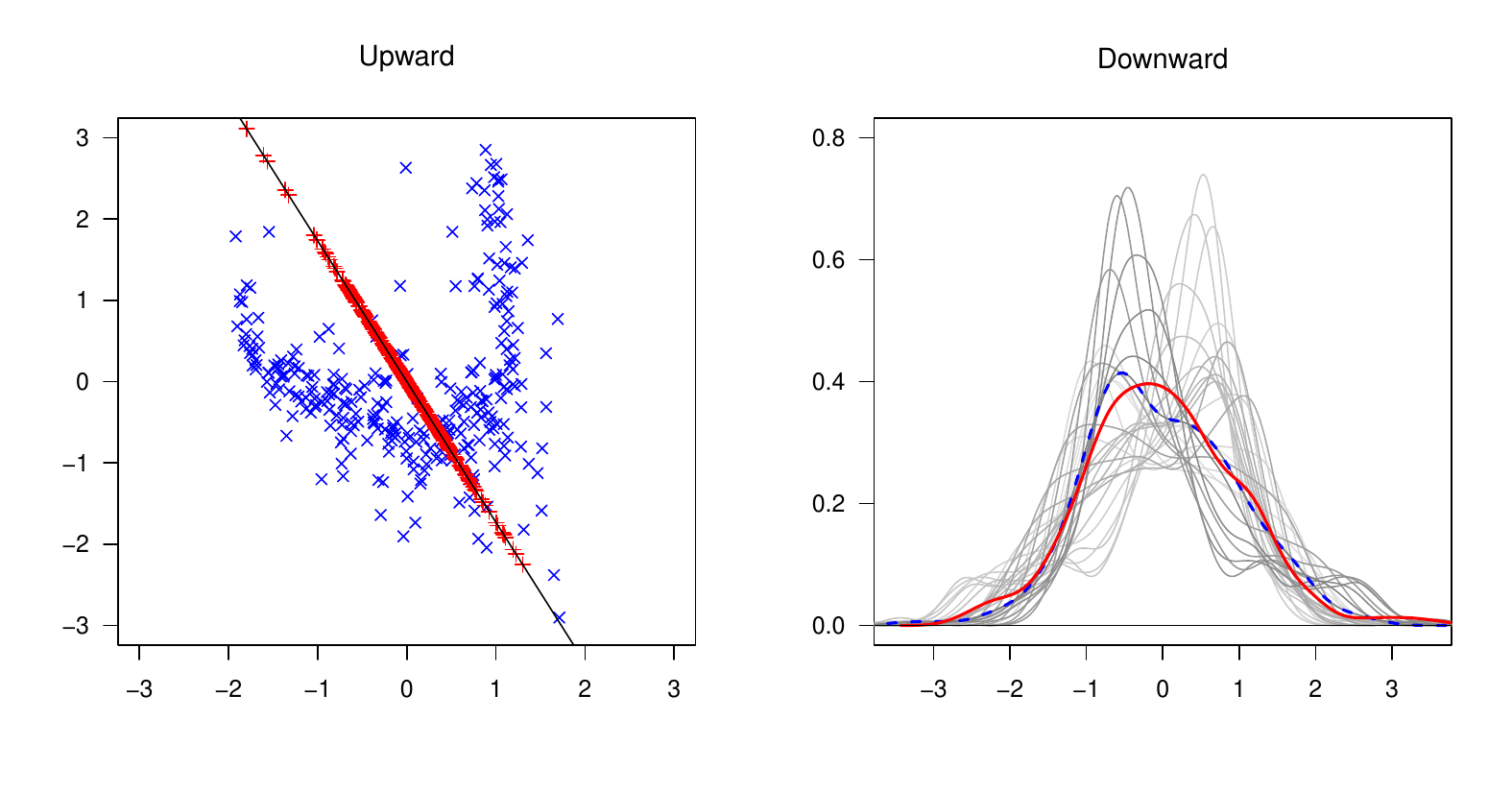}
	\includegraphics[scale = 0.55]{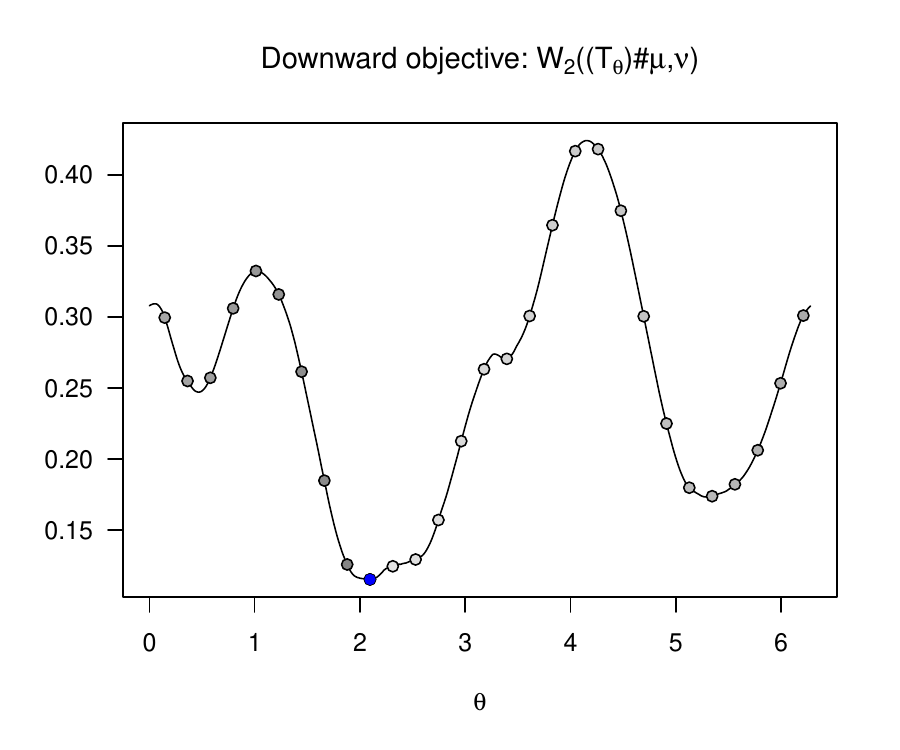}
	\caption{Empirical illustration of the Euclidean case and \cref{prop:Euclidean.equivalence}.}
	\label{fig:illustration}
\end{figure}

To illustrate the Euclidean case and \cref{prop:Euclidean.equivalence} we provide a simulated example in \cref{fig:illustration}. Here $n = 2$ and $d = 1$, and we parameterize $A \in \Hplanes$ by $A = \begin{bmatrix} \cos \theta & \sin \theta \end{bmatrix}^{\top}$ where $\theta \in [0, 2 \pi)$ represents the rotation angle. Each of $\mu \in \Prob_2(\R^2)$ and $\nu \in \Prob_2(\R)$ is a simulated point cloud with $300$ points, and has been normalized so that \cref{eqn:Euclidean.measure.normalization} holds. We choose $\nu$ to be a (normalized) empirical distribution sampled from $N(0, 1)$, and $\mu$ to be the empirical distribution of the tilted V-shaped point cloud in $\R^2$ in the left panel. The upward problem $\wass_2^{\uparrow}(\mu, \nu)$ is solved by finding an optimal $A_* \in \Hplanes$ which minimizes the $2$-Wasserstein distance between $\mu$ and $A_{\#} \nu$ on $\R^2$. In the left panel, the optimized $(A_*)_{\#} \nu$ is represented by the red points labeled by $+$, and the black line gives the column space of $A$. Equivalently, we may solve the downward problem $\wass_2^{\downarrow}(\mu, \nu)$ by minimizing the $2$-Wasserstein distance between $A_{\#}^{\top} \mu$ and $\nu$ on $\R$, and obtain the same $A_*$. In the right panel, we plot the density estimates of the optimized $(A_*)^{\top}_{\#} \mu$ (blue, dashed) and $\nu$ (red, solid). The thin curves (grey) are density estimates of $A_{\#}^{\top} \mu$ for several other (suboptimal) values of $A$. In the bottom panel, we plot the graph of $A \mapsto \wass_2(A_{\#}^{\top} \mu, \nu)$ as a function of the rotation angle $\theta \in [0, 2\pi)$. The dots indicate the pushforwards shown by the grey curves in the second graph. From the figure, we see clearly that the Wasserstein alignment problem is generally nonconvex and may posses multiple local minima. This example shows that uniqueness and stability of the solution may not hold even when $\Theta$ is one-dimensional.

\section{Proof of \texorpdfstring{\cref{prop:psiclaim}}{Proposition 4.2}} \label{sec:appendix.2}
Before giving the proof, we note that the claim is nontrivial since $\psi^c$ is itself a $c$-concave function. The strict convexity of $\psi^c$ implies that the Brenier map $T$ transporting $(\lambda)^\top_\#\mu$ to $\nu$ is a contraction, i.e.~$|T(y_2) - T(y_1)| < |y_2 - y_1|$ for $y_1 < y_2$. To show this, write $\psi^c(y) = \frac{1}{2}y^2 - \phi(y)$. Then $\phi$ is convex and is the Brenier potential from $\lambda_{\#}^{\top} \mu$ to $\nu$. The Brenier map is given by
\begin{equation} \label{eqn:explicit.example.Brenier}
	T(y) = \phi'(y) = y - (\psi^c)'(y),
\end{equation}
which is strictly increasing: for $y_1 < y_2$ we have $y_1 - (\psi^c)'(y_1) < y_2 - (\psi^c)'(y_2)$. Equivalently, $0 < (\psi^c)'(y_2) - (\psi^c)'(y_1) < y_2 - y_1$, where the first inequality follows from the strict convexity of $\psi^c$. It follows that $T(y_2) - T(y_1) < y_2 - y_1$.

To show that $\psi^c$ is strictly convex, it suffices to show that its derivative $(\psi^c)'$ is strictly increasing. From \cref{eqn:explicit.example.Brenier}, $(\psi^c)'$ is the identity minus the Brenier map transporting $\lambda_{\#}^\top \mu$ to $N(0,1)$. Let $c=\lambda^\top a$. Then we are looking for the Brenier map transporting the mixture $\frac{1}{2}N(c,1) + \frac{1}{2} N(-c,1)$ to $N(0,1)$. The solution to this is the function 
\begin{equation}\label{eq:brenierexample}
	\Phi^{-1}\left( \frac{1}{2} \Phi(\cdot - c) + \frac{1}{2} \Phi(\cdot + c) \right),
\end{equation}
where $\Phi$ is the standard normal cumulative distribution function.

Hence the claim boils down to showing that, for any $c\in \rr \setminus \{0\}$, the following function is strictly increasing on $\R$:
\begin{equation}\label{eq:whatisF}
	F(t) := t - \Phi^{-1}\left( \frac{1}{2} \Phi(t - c) + \frac{1}{2} \Phi(t + c) \right), \quad t \in \R.
\end{equation}
Let us argue that this is the case for $c=1$. For other values of $c$ the argument is similar. The plot of this function is given in \cref{fig:plotfninc}.
\begin{figure}[t!]
	\centering
	\includegraphics[width=0.55\linewidth]{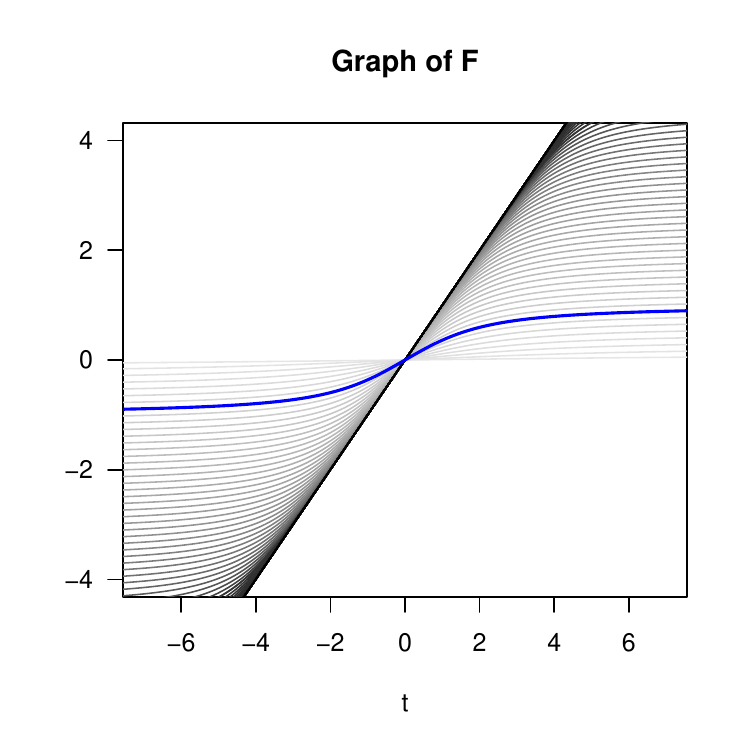}
	\vspace{-0.4cm}
	\caption{Graphs of the function $F(t)$ from \eqref{eq:whatisF} as $c$ increases from $0$ (light grey) to $\infty$ (black). The thick blue curve corresponds to $c = 1$.}
	\label{fig:plotfninc}
\end{figure}

Take $c=1$ and let $h(t):=\Phi^{-1}\left( \frac{1}{2} \Phi(t - 1) + \frac{1}{2} \Phi(t + 1) \right)$. Then $F(t)= t - h(t)$. We will prove that $F'(t) >0$ which is equivalent to proving $h'(t) < 1$. Let $\phi=\Phi'$ denote the standard normal density function. Then
\[
h'(t) = \frac{1}{\phi(h(t))}\left( \frac{1}{2} \phi(t - 1) + \frac{1}{2} \phi(t + 1) \right).
\]
Thus, to show that $h'(t) <1$, it is equivalent to proving that
\begin{equation}\label{eq:pfconvex}
	\phi(h(t)) > \frac{1}{2} \phi(t - 1) + \frac{1}{2} \phi(t + 1),
\end{equation}
where $h(t)$ is given by the condition 
\begin{equation}\label{eq:pfconvex2}
	\Phi(h(t))=\frac{1}{2} \Phi(t - 1) + \frac{1}{2} \Phi(t + 1).
\end{equation}

Since $\Phi$ is strictly increasing and is continuous, it is obvious that $t-1 < h(t) < t+1$. To prove \eqref{eq:pfconvex}, rewrite \eqref{eq:pfconvex2} as 
\begin{equation}\label{eq:pfconvex3}
	\int_{t-1}^{h(t)} \phi(u)du = \int_{h(t)}^{t+1} \phi(u)du. 
\end{equation}
Now, $\phi(u)$ is an injective function of $u$ when restricted to either $(0, \infty)$ or to $(-\infty, 0)$. We will consider several cases.     

\textbf{Case 1.} Suppose $t-1\ge 0$. Then $u \ge t-1 \ge 0$ and $v :=\phi(u)$ is a strictly decreasing function of $u$. By a change of variable, \eqref{eq:pfconvex3} can be written as
\begin{equation}\label{eq:pfconvex4}
	\int^{\phi(t-1)}_{\phi(h(t))} v \abs{\frac{du}{dv}} dv = \int_{\phi(t+1)}^{\phi(h(t))} v \abs{\frac{du}{dv}} dv.  
\end{equation}
Note that 
$v\abs{\frac{dv}{du}}= \frac{1}{u} \frac{1}{\sqrt{2\pi}}e^{-u^2/2}$,
which is strictly positive and strictly decreasing on $(0, \infty)$ as a function of $u$, and thus, strictly increasing as a function of $v$. Call this function $\chi(v)$. Thus, from \eqref{eq:pfconvex4}, 
\begin{equation}
	\begin{split}
		\chi(h(t))\left( \phi(t-1) - \phi(h(t)) \right) &< \int^{\phi(t-1)}_{\phi(h(t))} v \abs{\frac{du}{dv}} dv\\
		&= \int_{\phi(t+1)}^{\phi(h(t))} v \abs{\frac{du}{dv}} dv < \chi(h(t))\left( \phi(h(t)) - \phi(t+1) \right).
	\end{split}
\end{equation}
Canceling $\chi(h(t))$ from both sides gives us $\phi(t-1) - \phi(h(t)) < \phi(h(t)) - \phi(t+1)$, which is \eqref{eq:pfconvex}.

\textbf{Case 2.} Suppose $t+1 < 0$. This case is basically similar to the previous one. Now $v=\phi(u)$ is an increasing function of $u$ and $v \abs{\frac{dv}{du}}$ is strictly increasing in $v$. Thus,
\begin{equation}
	\begin{split}
		\chi(h(t))\left( \phi(h(t)) - \phi(t-1) \right) &> \int_{\phi(t-1)}^{\phi(h(t))} v \abs{\frac{du}{dv}} dv\\
		&= \int^{\phi(t+1)}_{\phi(h(t))} v \abs{\frac{du}{dv}} dv > \chi(h(t))\left( \phi(t+1) - \phi(h(t)) \right).
	\end{split}
\end{equation}
This leads to the same conclusion. 

\textbf{Case 3.} Suppose $t-1 < 0  < t+1$. In this case we divide \eqref{eq:pfconvex3} into integrals for $\{u >0\}$ and integrals for $\{u<0\}$. For the former, we apply the logic of Case 1, and for the latter we apply the logic of Case 2. Combining both parts give us \eqref{eq:pfconvex}. We skip the details.

\section{Implementation} \label{sec:implementation}
\subsection{An LP formulation for empirical measures} \label{sec:primal.dual}

Let us consider a discrete setup and rewrite the dual problem in \cref{thm:duality} in a standard linear programming (LP) format. Suppose that $\mu$ is an empirical distribution of the form $\sum_{i=1}^N p_i \delta_{x_i}$ and $\nu$ is similarly given by $\sum_{j=1}^M q_j \delta_{z_j}$. Here $(p_1, \ldots, p_N)$ and $(q_1, \ldots, q_M)$ are both probability vectors. Also assume that $\Theta$ is a finite set given by $\{\theta_1, \ldots, \theta_l\}$. We can now reduce all functions to vectors in Euclidean spaces:
\[
\xi_{ik}:= \xi(x_i, \theta_k), \qquad \psi_j:=\psi(z_j),\qquad c_{ijk}:=c(T_{\theta_k}x_i, z_j).
\]
It is straightforward to include the penalization function when it is present.

For any positive integer $j$, let $[j]$ denote the set $\{1,2,\ldots, j\}$. The dual constraint \cref{eq:dualconstraint} now reads
\begin{equation}\label{eq:dualconstraintdiscrete}
	\xi_{ik} + \psi_j \le c_{ijk},\quad \forall \; (i,j,k) \in [N]\times [M]\times [l],
\end{equation}
while the constraint that $\xi \in \mathcal{F}_\mu$ now reads
\begin{equation}\label{eq:Fmudiscrete}
	\sum_{i=1}^N \xi_{ik} p_i = \sum_{i=1}^N \xi_{i1} p_i,\; \forall \; k\in [l].  
\end{equation}
Hence the dual problem in \cref{eq:cxdual} can be written as
\begin{equation} \label{eqn:LP.discrete}
	\max\left\{ \sum_{i=1}^N \xi_{i1} p_i + \sum_{j=1}^M \psi_j q_j \right\},
\end{equation}
where the maximum is taken over the free variables
\[
\xi_{ik}, \psi_j \in \rr, \quad \forall i \in [N],\; j \in [M],\; k \in [l],  
\]
satisfying linear constraints \cref{eq:dualconstraintdiscrete} and \cref{eq:Fmudiscrete} where the constants $(c_{ijk})$ are fixed. This finite-dimensional LP problem can now be solved by any standard LP solver. 

Furthermore, once an optimal pair $(\psi, \xi)$ has been found, an optimal $\theta_* \in \{\theta_1,\ldots, \theta_l\}$ can also be easily found via the following observation. For each $k\in [l]$, consider the vector of slack variables 
\[
c_{ijk} - \xi_{ik} - \psi_j,\quad (i,j)\in [N] \times [M].
\]
By our duality, there will exist some $k\in [l]$, for which the usual Kantorovich duality will hold and $\xi_{ik}=\psi^{\overline{c}}_{ik}$, where $\psi^{\overline{c}}_{ik} := \min_{j=1,\ldots, M}\{c_{ijk} - \psi_j\}$.
This characterizes any optimal $\theta_*$, via complementary slackness. 

One may also use \cref{cor:optimality} directly. The solver outputs a solution to the linear programming problem \cref{eqn:LP.discrete} for which strong duality holds. Hence, the duality gap in \cref{eqn:gap2} is zero and \cref{cor:optimality} applies. Hence
\begin{equation}\label{eq:theta_*}
	\theta_* \in \argmin_{k \in [l]} I_{\theta_k}, \quad \mbox{where} \quad I_{\theta_k} := \sum_{i=1}^N p_i \psi^{\overline{c}}_{ik}. 
\end{equation}

\paragraph{Continuous transformation families}
The finite-$\Theta$ LP above also suggests a possible route to extend to the case where $\Theta$ is continuous. 
Assume that $\Theta$ is a compact subset of $\R^d$, while $\mu$ and $\nu$ remain empirical. Introducing a scalar \(m\) for the common value of $\int \psi(x, \theta) d \mu(x) = \sum_{i=1}^N p_i \xi_i(\theta)$ across $\theta$ (when $\xi \in \mathcal{F}_{\mu}$), the dual problem in \cref{thm:duality2} can be rewritten as the semi-infinite program
\begin{equation}\label{eq:semiinfinite.master}
	\sup_{m\in \mathbb{R},\, \psi\in \mathbb{R}^M}
	\left\{
	m+\sum_{j=1}^M q_j\psi_j \;:\; m\le I_\theta(\psi)\quad \forall \theta\in\Theta
	\right\},
\end{equation}
where
\[
I_\theta(\psi):=\sum_{i=1}^N p_i (\psi^{\overline c_\theta})_i \quad \text{and} \quad (\psi^{\overline c_\theta})_i:= \min_{j\in[M]}\{c(T_\theta x_i,z_j)-\psi_j\},
\]
with the obvious modification when a penalization term is present. 
Indeed, for fixed \((m,\psi)\), the largest feasible value of \(\sum_{i=1}^N p_i \xi_i(\theta)\) under the constraints
\[
\xi_i(\theta)+\psi_j\le c(T_\theta x_i,z_j),\qquad i\in[N],\ j\in[M],
\]
is precisely \(I_\theta(\psi)\), attained by taking
\[
\xi_i(\theta)=\min_{j\in[M]}\{c(T_\theta x_i,z_j)-\psi_j\}.
\]
Hence the existence of \(\xi_i(\theta)\) satisfying
\[
\xi_i(\theta)+\psi_j\le c(T_\theta x_i,z_j),\qquad
m\le \sum_{i=1}^N p_i\xi_i(\theta),
\]
is equivalent to the single inequality \(m\le I_\theta(\psi)\).

Formulation \eqref{eq:semiinfinite.master} naturally leads to an exchange (or cutting-plane) scheme. 
Starting from a finite active set \(\Theta^{(r)}=\{\theta^1,\dots,\theta^r\}\subset \Theta\), one solves the restricted master LP
\[
\max_{m,\psi,\xi^1,\dots,\xi^r}
\left\{
m+\sum_{j=1}^M q_j\psi_j
\right\}
\]
subject to
\[
\xi_i^s+\psi_j\le c(T_{\theta^s}x_i,z_j),\qquad
m\le \sum_{i=1}^N p_i \xi_i^s,
\quad
\forall i\in[N],\ j\in[M],\ s\in[r].
\]
Let \((m^{(r)},\psi^{(r)},\xi^{1,(r)},\dots,\xi^{r,(r)})\) denote an optimal solution of this restricted problem.

One then considers the pricing (or separation) problem
\[
\min_{\theta\in\Theta} I_\theta(\psi^{(r)}),
\qquad
I_\theta(\psi)=\sum_{i=1}^N p_i \min_{j\in[M]}
\{c(T_\theta x_i,z_j)-\psi_j\}.
\]
For general continuous families this auxiliary problem need not be convex, and it may itself be the main computational difficulty. 
However, it is an optimization problem only in the transformation parameter \(\theta\), with the transport variables already eliminated. 
When \(c\) and \(\theta\mapsto T_\theta x_i\) are smooth, \(I_\theta(\psi)\) is a finite lower envelope of smooth functions and is therefore continuous and piecewise smooth, although typically nonconvex and nonsmooth at switching points. 
This makes adaptive gridding, multistart local search, subgradient-type methods, and branch-and-bound natural candidates, especially when \(\dim(\Theta)\) is small.

To continue the exchange algorithm, it suffices to find any \(\hat\theta\in\Theta\) such that
\[
I_{\hat\theta}(\psi^{(r)})<m^{(r)},
\]
since this yields a violated constraint that can be added to the active set. 
By contrast, a certified stopping criterion requires solving the pricing problem globally, or at least obtaining a rigorous lower bound for it. 
In particular, if
\[
\underline I^{(r)}:=\min_{\theta\in\Theta} I_\theta(\psi^{(r)}) \ge m^{(r)}-\varepsilon,
\]
then the current restricted solution is \(\varepsilon\)-optimal for the full semi-infinite problem. 
Moreover, \((\underline I^{(r)},\psi^{(r)})\) is feasible for \eqref{eq:semiinfinite.master}, so
\[
m^{(r)}-\underline I^{(r)}
\]
yields a computable optimality gap whenever the pricing problem is solved globally.

This viewpoint may also be interpreted as the dual counterpart of column generation: each newly generated \(\theta\) contributes one additional transformation block to the restricted LP. 
Thus, the dual formulation does not remove the computational difficulty associated with optimizing over a continuous transformation family, but it does isolate that difficulty in a lower-dimensional pricing oracle, while the master problem remains a standard OT-type LP on the empirical supports. 
In this sense, the conceptual advantage of the dual viewpoint is preserved: one explores the transformation family adaptively, rather than through an a priori fine discretization of \(\Theta\). We also note that in specific alignment problems there may be additional structures that can be exploited.

\medskip

Let us now illustrate the effectiveness of our approach on a few synthetic data sets.

\subsection{Alignment of two 2D point clouds}
Shape analysis is a fundamental field in statistics, computer vision, and computational geometry that studies the properties of geometric shapes and their transformations~\cite{Dryden-Mardia-2016,  Srivastava-2016}. It involves tasks such as shape matching, classification, recognition, registration, and deformation modeling. Applications span various domains, including medical imaging, robotics, object recognition, 2D/3D reconstruction, and biometric identification.

A key challenge in shape analysis is the alignment of two 2D point clouds, a process known as \textit{shape registration}. The \textit{Iterative Closest Point (ICP)} algorithm~\cite{besl1992method,chen1992object} is a point cloud registration algorithm that is a widely used method for rigid shape alignment. Although ICP is computationally efficient and effective for rigid transformations, it is sensitive to initialization and noise, often leading to suboptimal results~\cite{Srivastava-2016}.

\begin{figure}
	\centering
	\includegraphics[scale=0.33]{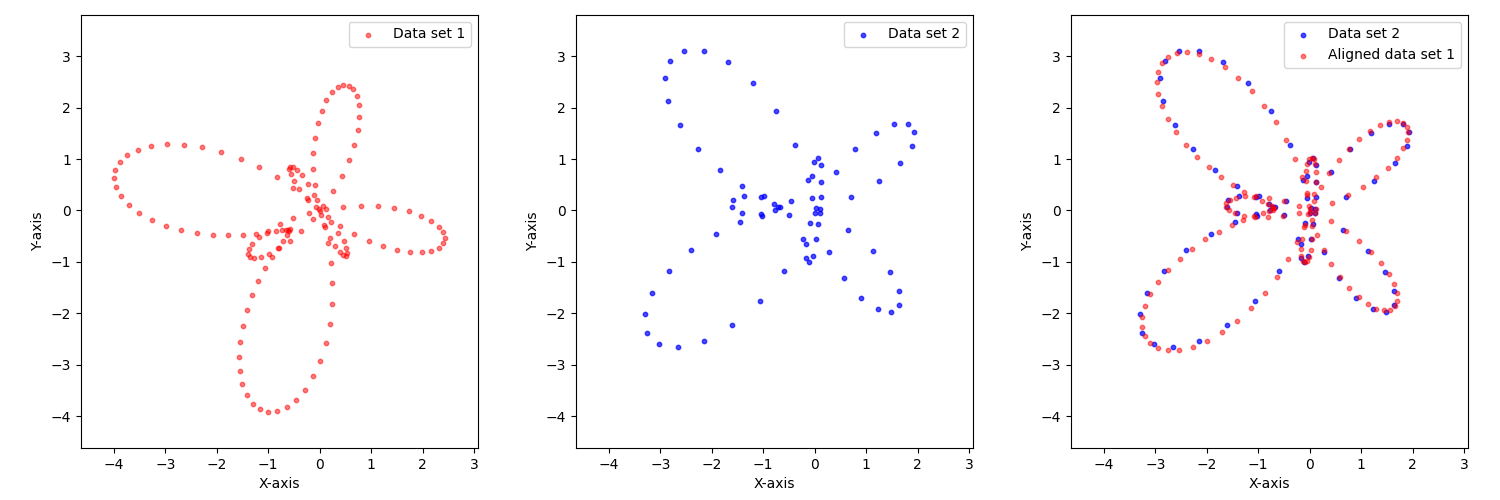}
	\includegraphics[scale=0.33]{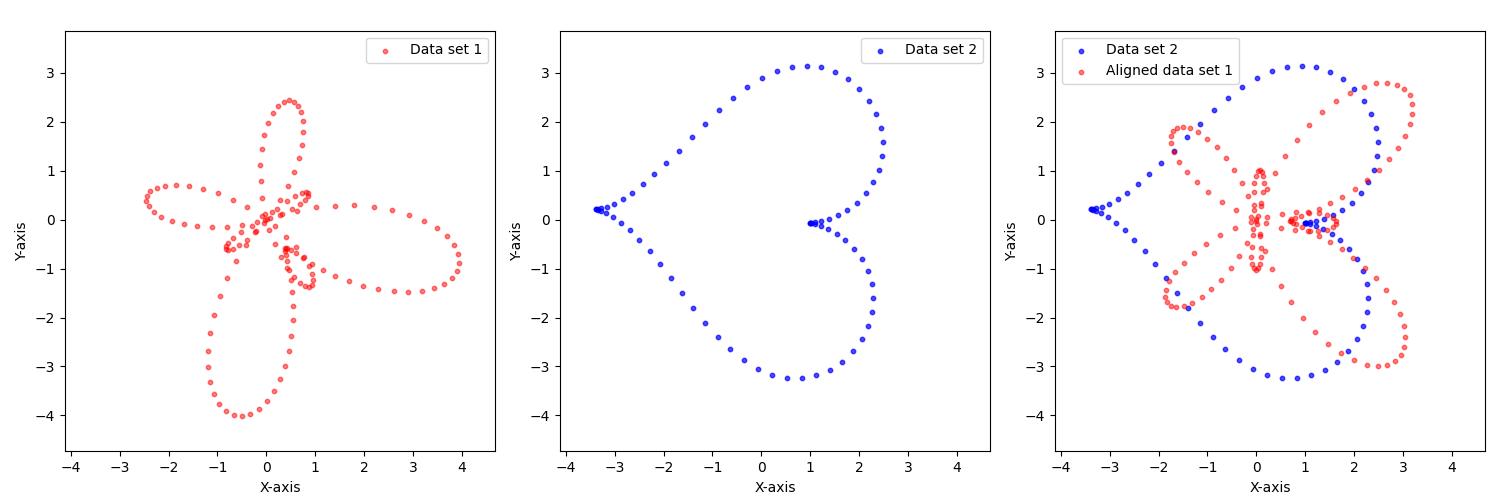}
	\includegraphics[scale=0.33]{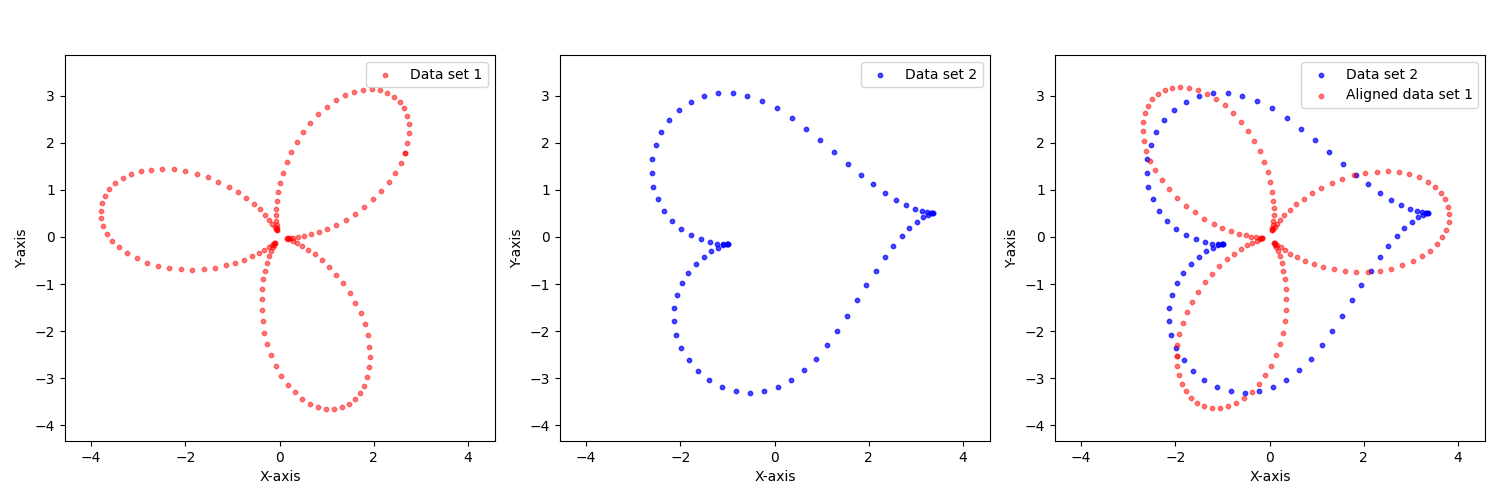}
	\caption{{\bf Left}: The discrete distribution $\mu$ with $N = 150$ equally weighted atoms. {\bf Center}: The discrete distribution $\nu$ with $M= 80$ equally weighted atoms. {\bf Right}: The two distributions optimally aligned.}
	\label{fig:Shapes}
\end{figure}

Our proposed Wasserstein alignment method with the convex Kantorovich-type dual provides an effective solution for aligning two sets of data points. Unlike traditional approaches, it offers greater flexibility and robustness, particularly in scenarios involving partial correspondences and non-rigid deformations. Moreover, being a convex optimization problem, it avoids the limitations of the ICP algorithm, which is often sensitive to initialization and prone to local minima. To demonstrate the effectiveness of our approach, we present three toy examples in shape analysis, following the spirit of~\cite[Chapter 2]{Srivastava-2016}.

For our illustrations, we utilize the linear program described in \cref{sec:primal.dual}, implemented using the off-the-shelf \texttt{linprog} solver in {\it Python}. We consider three distinct shapes: (i) {\it butterfly}, (ii) {\it heart}, and (iii) {\it flower}. Each row in \cref{fig:Shapes} presents the Wasserstein alignment between two datasets, where each dataset is generated by applying a random 2D rotation to one of the three shapes.

In each example, the two discrete distributions, $\mu$ and $\nu$, are supported on $N=150$ and $M=80$ points, respectively. We consider the rotation space $\Theta=[0,2\pi)$, where each transformation $T_\theta$ is anticlockwise rotation by angle $\theta$ in $\R^2$. For our implementation, we use an equi-spaced grid of angles $\{\theta_1,\ldots, \theta_l\}$ with $l = 40$. Each row in \cref{fig:Shapes} is structured as follows:
\begin{itemize}
	\item The first two plots show the two unaligned data sets, corresponding to the different shapes.
	
	\item The third plot displays the optimal Wasserstein alignment of the data sets
\end{itemize} 
Notably, the optimal alignments in the second and third examples of \cref{fig:Shapes} preserve the inherent symmetries of the underlying shapes. Moreover, the results align closely with our intuitive expectations of how these data sets should be matched, further demonstrating the effectiveness of our approach.

\section*{Acknowledgments}
This project started during our visit to The Mathematics of Data workshop at the Institute for Mathematical Sciences (IMS), National University of Singapore in 2024. We thank the IMS and the organizers Afonso Bandera, Subhro Ghosh, and Philippe Rigollet for inviting us. We also thank Zhengxin Zhang for pointers to the Gromov--Wasserstein literature and Adam Jaffe for helpful conversations.

\bibliographystyle{siamplain}
\bibliography{references}

\end{document}